\newtheorem{thm}{Theorem}
\newtheorem{prop}[thm]{Proposition}
\newtheorem{lemma}{Lemma}
\theoremstyle{definition}
\newtheorem{defi}{Definition}
\newtheorem{example}{Example}
\newtheorem{remark}{Remark}
\begin{document}

\title[Lumpability for Markov chains on poset block structures]{The lumpability property for a family of Markov chains on poset block structures}

\author{Daniele D'Angeli}
\address{Departamento de Matem\'{a}tica, Universidade de Tr\'{a}s-os-Montes e Alto Douro, Quinta do Prado, Vila Real 5001-801, Portugal}
\email{dangeli@utad.pt}

\author{Alfredo Donno}
\address{Dipartimento di Scienze di Base e Applicate per l'Ingegneria, Sezione di Matematica, Sapienza Università di Roma,
Via A. Scarpa, 16 00161 Roma, Italia. Phone: +39 06 49766627.}
\email{alfredo.donno@sbai.uniroma1.it}

\keywords{Lumpable Markov chain, Poset block structure, Markov
operator, Generalized crested product, Generalized wreath product,
Insect Markov chain.}

\date{\today, preprint}

\begin{abstract}
We construct different classes of lumpings for a family of Markov
chain products which reflect the structure of a given finite
poset. We use essentially combinatorial methods. We prove that,
for such a product, every lumping can be obtained from the action
of a suitable subgroup of the generalized wreath product of
symmetric groups, acting on the underlying poset block structure,
if and only if the poset defining the Markov process is totally
ordered, and one takes the uniform Markov operator in each factor
state space. Finally we show that, when the state space is a
homogeneous space associated with a Gelfand pair, the spectral
analysis of the corresponding lumped Markov chain is completely
determined by the decomposition of the group action into
irreducible submodules.
\end{abstract}

\maketitle

\begin{center}
{\footnotesize{\bf Mathematics Subject Classification (2010)}:
06A07, 20B25, 20E22, 43A85, 60J10.}
\end{center}

\section{Introduction}

The study of Markov chains is one of the most useful tools of
Probability and has several fundamental applications in many areas
of science: Mathematics, Statistics, Physics, Chemistry,
Mathematical Biology, Information Science, but also in Economics
and Social Sciences, due to the fact that Markov chains can be
used for modelling an infinitely large
variety of evolution processes.\\
\indent One of the most interesting and studied problems in Markov
chain Theory is the investigation of the rate of convergence to
the stationary distribution, which is mainly performed via the
spectral analysis of the associated Markov operator. One is often
interested in establishing if the convergence to the stationary
distribution presents a cutoff. The term cutoff is used to
describe the situation where the total variation distance between
the $k$-step transition probability and the stationary
distribution stays close to its maximum value at $1$ for a while,
then suddenly drops to a quite small value and tends to $0$
exponentially fast \cite{cut1, diaconisbook}. The presence of such
a phenomenon was proven in a number of models \cite{cut2, cut3,
cut4}; on the other hand, it was shown in \cite{cutoff} that the
Markov chain described in Section \ref{cutoffcitation} of the
present paper converges to the limit distribution without a cutoff.\\
\indent Lumping a Markov chain appears as a useful tool in this
kind of investigation, since by lumping a Markov chain the
spectral gap cannot decrease. Informally, when a Markov chain is
lumpable, it is possible to reduce the number of states by a sort
of aggregation process, obtaining a \lq\lq smaller\rq\rq Markov
chain. Notice that in \cite[Chapter 1]{woess} the lumping
construction is called factorization, and the lumped Markov chain
is called the factor chain. In \cite{franceschinis}, the weaker
notion of quasi-lumpability is introduced: roughly speaking, a
quasi-lumpable Markov chain is a Markov chain that is not
lumpable, but can be altered by a small perturbation in such a way
that the resulting Markov chain is lumpable in the classical
sense. In \cite{farago}, the author provides some new bounds on
the rate of convergence of aggregated Markov chains; in
\cite{farago2}, the lumpability and quasi-lumpability properties
are used in order to speed up the Markov Chain Monte Carlo, i.e.,
to reduce its running time necessary to get a sufficiently good
approximation of the stationary distribution. In \cite{zhoulange},
the lumpability property is used in the study of a family of
composition Markov chains, having numerous applications as, for
instance, to Ehrenfest chains, or to Kimura's continuous time
chain for the DNA evolution.\\
\indent In our paper, we consider a family of finite Markov
chains, that we call generalized crested products, introduced in
\cite{generalizedcrested} and providing a generalization of the
crested product of Markov chains previously defined in
\cite{crested}, motivated by the construction developed in
\cite{treesarticle} in the context of Gelfand pairs. The theory of
crested products was introduced by R. A. Bailey and P. J. Cameron
\cite{baileycrested} in the setting of symmetric association
schemes and groups, and then extended to the case of arbitrary
permutation representations by F. Scarabotti and F. Tolli in
\cite{plms}. Observe that the crested product of Markov chains
also generalizes some classical diffusion models, e.g. the
Ehrenfest model and the Laplace-Bernoulli model (see also
\cite{symmetrical}, where the spectral analysis of a combination
of these models is performed by using Gelfand pair theory). Our
Markov chains are naturally defined starting from a finite poset
$(I,\preceq)$: given a Markov chain with finite space state $X_i$,
for each $i\in I$, the generalized crested product is a Markov
chain reflecting the combinatorial structure of the poset
$(I,\preceq)$, and whose state space is the cartesian product
$X_1\times \cdots \times
X_n$.\\
\indent After recalling the definition and some basic facts about
lumpable Markov chains, generalized crested products and
generalized wreath products (Section \ref{sectionpreliminaries}),
we construct in Section \ref{sectionlumpings} different classes of
lumpings for the generalized crested products of Markov chains:
the deletion construction (see Theorem \ref{forgetisgood} and
Propositions \ref{propositionR} and \ref{deletingreduced}),
obtained by identifying elements which differ only for coordinates
indexed by a subset $R$ of $I$; the direct product of lumpings
(see Theorem \ref{thm6}), where a lumping partition is obtained as
a direct product of lumpings of each factor $X_i$; and the
generalized crested product of lumpings (see Theorem \ref{thm7}),
which is a partition equivalent of the construction introduced in
\cite{bayleygeneralized} for permutation groups, taking into
account the combinatorial structure of the poset $(I,\preceq)$.\\
\indent In Section \ref{labelinsect}, we recall the so called
Insect Markov chain defined in \cite{orthogonal} in the more
general setting of orthogonal block structures, and generalizing a
model introduced in \cite{figatalamanca}, whose state space can be
identified with the boundary of a combinatorial structure called
poset block structure, whose automorphism group is the generalized
wreath product of permutation groups defined in
\cite{bayleygeneralized}, and which can be endowed with a metric
space structure (Lemma \ref{lemmametricspace}). Finally, in
Section \ref{sectiongroups}, we focus our attention on lumped
Markov chains that can be obtained by using the orbit partition
associated with the action of an automorphism group on the state
space. In particular, we prove that if the poset $(I,\preceq)$ is
a totally ordered set, so that the corresponding poset block
structure is a rooted tree, then every lumping of the Insect
Markov chain can be obtained from the action of a suitable
automorphism group (see Theorem \ref{teoinsetto}). We show that,
when the state space can be expressed as a homogeneous space with
respect to the stabilizer of some fixed element, then the Gelfand
pair theory developed in \cite{orthogonal} allows to give a
complete spectral analysis. On the other hand, if the poset
$(I,\preceq)$ is not a chain, we prove that the Insect Markov
chain admits a lumping that cannot be obtained using the action of
an automorphism group (see Proposition \ref{prop14}).

\section{Preliminaries}\label{sectionpreliminaries}
\subsection{Lumpable Markov chains}

We recall in this section some basic facts about finite Markov
chains and lumpability property (see, for instance, \cite[Chapter
1]{libro}).

Let $X$ be a finite set and let $P=(p(x,y))_{x,y\in X}$ be a
stochastic matrix indexed by the elements of $X$. Consider a
Markov chain on $X$ with transition probability matrix $P$. By
abuse of notation, we will denote by $P$ the Markov chain as well
as the associated Markov operator on $L(X) = \{f:X \longrightarrow
\mathbb{C}\}$, defined as
$$
(Pf)(x) = \sum_{y\in X}p(x,y)f(y), \qquad \text{for all } f \in
L(X), x\in X.
$$
\begin{defi}
The Markov chain $P$ is reversible if there exists a strict
probability measure $\pi$ on $X$ such that
$$
\pi(x) p(x,y) = \pi(y) p(y,x), \qquad \text{for all } x,y \in X.
$$
\end{defi}
If this is the case, we say that $P$ and $\pi$ are in detailed
balance \cite{aldous}. Define a scalar product on $L(X)$ as
$$
\langle f_1,f_2 \rangle_{\pi} = \sum_{x\in X}f_1(x)
\overline{f_2(x)}\pi(x), \qquad \text{for all } f_1, f_2\in L(X).
$$
It is easy to verify that $\pi$ and $P$ are in detailed balance if
and only if $P$ is self-adjoint with respect to the scalar product
$\langle \cdot, \cdot \rangle_{\pi}$. Under these hypotheses, it
is known that the matrix $P$ can be diagonalized over
$\mathbb{R}$. Let $\sigma(P)$ denote the spectrum of $P$. Then,
one has $1\in \sigma(P)$ and $|\lambda|\leq 1$ for any $\lambda\in
\sigma(P)$.

Starting from a Markov chain $P$ with state space $X$, the notion
of lumpability allows to construct a new Markov chain which has a
smaller state space. So let $\mathcal{L} = \{L_1,\ldots,L_k\}$ be
a partition of $X$, i.e., $X= \sqcup_{i=1}^k L_i$. Then, for each
$x\in X$ and $i=1,\ldots, k$, put
$$
p(x, L_i) = \sum_{y\in L_i} p(x,y).
$$
\begin{defi}
The Markov chain $P$ is {\it lumpable} with respect to the
partition $\mathcal{L}$ if, for any pair $L_i, L_j\in
\mathcal{L}$, the function $x\mapsto p(x,L_j)$ is constant on
$L_i$.
\end{defi}
If this is the case, we put $\widetilde{p}(L_i,L_j)=p(x,L_j)$, for
any $L_i,L_j\in \mathcal{L}$ and $x\in L_i$. Therefore, a
stochastic matrix $\widetilde{P} =
(\widetilde{p}(L_i,L_j))_{i,j=1,\ldots, k}$ can be defined, so
that $\widetilde{P}$ can be regarded as the transition probability
matrix of the lumped Markov chain, whose state space contains $k$
elements, identified with the parts of the partition
$\mathcal{L}$. We will say that $\mathcal{L}$ is a
\textit{lumping} of $P$. It is easy to check that, if $P$ is in
detailed balance with respect to the probability measure $\pi$,
then $\widetilde{P}$ is reversible with respect to
$\pi|_{\mathcal{L}}$, where $\pi(L_i)$ is defined by $\pi(L_i)=
\sum_{x\in L_i}\pi(x)$.\\
\indent Roughly speaking, lumpability means that some states of
$P$ can be aggregated and replaced by a single state, providing a
Markov chain which has a smaller state space, but whose behavior
is essentially the same as the original one. Observe that there
can exist many different lumpings of the same Markov chain, which
are not necessarily obtained via successively refinements of the
corresponding partitions (see, for instance, Example
\ref{differentdeletions}).

Let us denote by ${\bf 1}_{L_i}$, for $i=1,\ldots, k$, the
characteristic function of $L_i$, i.e,
$$
{\bf 1}_{L_i}(x)=\begin{cases}
1&\text{if}\ x\in L_i\\
0& \text{otherwise}
\end{cases}
$$
and let $W_{\mathcal{L}}$ be the subspace of $L(X)$ generated by
the functions $\{{\bf 1}_{L_i}\}_{i=1,\ldots, k}$. In other words,
$W_{\mathcal{L}}$ is the subspace of $L(X)$ constituted by the
functions which are constant on each part of $\mathcal{L}$. Denote
by $\delta_{L_i}$ the Dirac function centered at $L_i$ in
$L(\mathcal{L})$.

\begin{prop}\label{lumpableproperties}
$P$ is lumpable with respect to $\mathcal{L}$ if and only if the
subspace $W_{\mathcal{L}}$ is $P$-invariant. Let $x\in X$ and
suppose that $x\in L_j$, for some $j\in \{1,\ldots,k\}$. Then
$P({\bf 1}_{L_i})(x) = \widetilde{P}(\delta_{L_i})(L_j)$, so that
there exists a bijection between the eigenfunctions of the lumped
chain and the eigenfunctions of $P$ which are constant on each
part of $\mathcal{L}$. If $f$ is such an eigenfunction, with
associated eigenvalue $\lambda$, then the lumped eigenfunction
$\widetilde{f}$ defined by $\widetilde{f}(L_j) = f(x)$ is well
defined and has eigenvalue $\lambda$.
\end{prop}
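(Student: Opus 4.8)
The plan is to prove each assertion of Proposition~\ref{lumpableproperties} by unwinding the definitions and exploiting the natural correspondence between functions on $X$ that are constant on the parts of $\mathcal{L}$ and functions on the quotient set $\mathcal{L}$. First I would establish the equivalence ``$P$ lumpable $\iff$ $W_{\mathcal{L}}$ is $P$-invariant''. For the forward direction, take a characteristic function ${\bf 1}_{L_i}$ and compute $(P{\bf 1}_{L_i})(x) = \sum_{y\in X} p(x,y){\bf 1}_{L_i}(y) = \sum_{y\in L_i} p(x,y) = p(x,L_i)$; lumpability says this is constant on each part $L_j$, hence $P{\bf 1}_{L_i}\in W_{\mathcal{L}}$, and since the ${\bf 1}_{L_i}$ span $W_{\mathcal{L}}$ and $P$ is linear, $W_{\mathcal{L}}$ is $P$-invariant. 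Conversely, if $W_{\mathcal{L}}$ is $P$-invariant, then in particular $P{\bf 1}_{L_i}$ is constant on each $L_j$, which is exactly the statement that $x\mapsto p(x,L_i)$ is constant on $L_j$ for all $i,j$, i.e.\ $P$ is lumpable.

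Next I would prove the identity $P({\bf 1}_{L_i})(x) = \widetilde{P}(\delta_{L_i})(L_j)$ for $x\in L_j$. This is again a direct computation: the left-hand side equals $p(x,L_i)$ by the calculation above, and by the definition of the lumped chain, $p(x,L_i) = \widetilde{p}(L_j,L_i)$ whenever $x\in L_j$; on the other hand $\widetilde{P}(\delta_{L_i})(L_j) = \sum_{L_m\in\mathcal{L}} \widetilde{p}(L_j,L_m)\delta_{L_i}(L_m) = \widetilde{p}(L_j,L_i)$, so the two sides agree.

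For the bijection between eigenfunctions, I would set up the linear isomorphism $\Phi\colon L(\mathcal{L})\to W_{\mathcal{L}}$ sending $\delta_{L_i}\mapsto {\bf 1}_{L_i}$ (equivalently, $g\mapsto g\circ q$ where $q\colon X\to\mathcal{L}$ is the quotient map), whose inverse sends a function $f$ constant on parts to $\widetilde f$ with $\widetilde f(L_j)=f(x)$ for $x\in L_j$. The content is that $\Phi$ intertwines $\widetilde P$ on $L(\mathcal{L})$ with the restriction of $P$ to $W_{\mathcal{L}}$: expanding $g=\sum_i g(L_i)\delta_{L_i}$ and using linearity together with the identity just proved gives $(P\circ\Phi)(g)(x) = \sum_i g(L_i)(P{\bf 1}_{L_i})(x) = \sum_i g(L_i)\widetilde{p}(L_j,L_i)$ for $x\in L_j$, while $(\Phi\circ\widetilde P)(g)(x) = (\widetilde P g)(L_j) = \sum_i \widetilde p(L_j,L_i)g(L_i)$, and these coincide. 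Since $\Phi$ is a bijection intertwining the two operators, eigenvectors correspond to eigenvectors with the same eigenvalue; concretely, if $Pf=\lambda f$ with $f\in W_{\mathcal{L}}$ then $\widetilde f=\Phi^{-1}(f)$ satisfies $\widetilde P\widetilde f = \lambda\widetilde f$, and conversely.

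There is no real obstacle here: the statement is essentially a repackaging of the definition of lumpability in operator-theoretic language, and every step is a one-line computation. The only point requiring a modicum of care is making sure the map $\widetilde f(L_j):=f(x)$ is well defined---this is precisely where one uses that $f$ is constant on each part $L_j$, so that the value does not depend on the choice of representative $x\in L_j$---and keeping straight which index refers to the source part ($L_j$, containing $x$) and which to the target part ($L_i$, the one whose characteristic function is being pushed forward).
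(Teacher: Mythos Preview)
Your proof is correct and follows essentially the same approach as the paper's: both compute $(P{\bf 1}_{L_i})(x)=p(x,L_i)$ to obtain the equivalence and the identity with $\widetilde{P}(\delta_{L_i})(L_j)$, and both derive the eigenfunction correspondence from that identity. The only cosmetic difference is that you package the eigenfunction bijection via an explicit intertwining isomorphism $\Phi$, whereas the paper verifies directly that $\widetilde P\widetilde f=\lambda\widetilde f$ by the same chain of equalities; your version is slightly more thorough in also making the converse direction of the bijection explicit.
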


\begin{proof}
Let $x\in L_j$. We have
$$
(P{\bf 1}_{L_i})(x) = p(x,L_i) = \sum_{y\in L_i}p(x,y).
$$
and therefore $P{\bf 1}_{L_i}$ is constant on each part $L_j$ of
$\mathcal{L}$ if and only if $p(x,L_i)$ does not depend on $x\in
L_j$. If this is the case, one has
$p(x,L_i)=\widetilde{p}(L_j,L_i)=\widetilde{P}(\delta_{L_i})(L_j)$.
Now let $f$ be an eigenfunction of $P$ with eigenvalue $\lambda$,
which is constant on each part of $\mathcal{L}$. One has:
$$
\lambda\widetilde{f}(L_j)= \lambda f(x) = (Pf)(x) =
\sum_{i=1}^k\!\left(\sum_{y\in L_i}p(x,y)f(y)\!\right)=
\sum_{i=1}^k\widetilde{p}(L_j,L_i)\widetilde{f}(L_i) =
(\widetilde{P}\widetilde{f})(L_j),
$$
so that $\widetilde{f}$ still has eigenvalue $\lambda$.
\end{proof}
It follows from Proposition \ref{lumpableproperties} that
$\sigma(\widetilde{P})\subseteq \sigma(P)$. Thus, the spectral gap
for $\widetilde{P}$ is never smaller than that of $P$. This fact
is what makes lumping a possible strategy for accelerating
convergence to the stationary distribution \cite{addedreference}.

\begin{example}\rm
Consider the Markov chain on the set $X=\{x_0,x_1,x_2,x_3\}$, with
associated transition probability matrix
$P=(p(x_i,x_j))_{i,j=0,1,2,3}$ given by
$$
P = \frac{1}{12}\begin{pmatrix}
  5& 5 & 1 & 1 \\
  5 & 5 & 1 & 1 \\
  1 & 1 & 5 & 5 \\
  1 & 1 & 5 & 5
\end{pmatrix}.
$$
Take the partition $\mathcal{L}$ of $X$ given by $X=L_1\sqcup
L_2$, with $L_1=\{x_0,x_2\}$, $L_2=\{x_1,x_3\}$. $P$ is lumpable
with respect to $\mathcal{L}$ and the matrix $\widetilde{P}$ of
the lumped Markov chain is given by
$$
\widetilde{P} = \frac{1}{2}\begin{pmatrix}
  1 & 1 \\
  1 & 1
\end{pmatrix}.
$$
We have $\sigma(P) = \{1, \frac{2}{3},0\}$ and the associated
eigenspaces are $W_1$, generated by the function $f_1=(1,1,1,1)$,
$W_{2/3}$, generated by the function $f_{2/3}=(1,1,-1,-1)$ and
$W_0$, generated by the functions $f_{0,1}=(1,-1,0,0)$ and
$f_{0,2}=(0,0,1,-1)$. Moreover, $\sigma(\widetilde{P}) = \{1,0\}$
and the associated eigenspaces are $\widetilde{W}_1$, generated by
the function $\widetilde{f}_1=(1,1)$ and $\widetilde{W}_0$,
generated by the function $\widetilde{f}_0=(1,-1)$. Note that the
eigenspace $W_{2/3}$ does not contain any function which is
constant on $L_1$ and $L_2$.
\end{example}

Now suppose that $P$ is lumpable with respect to the partition
$\mathcal{L} = \{L_1,\ldots,L_k\}$ of $X$ and let $\widetilde{P}$
be the associated lumped Markov chain. Let $\widetilde{P}$ be
lumpable with respect to the partition $\mathcal{M}=\{M_{I_1},
\ldots, M_{I_h}\}$ of $\mathcal{L}$, where $\{1, \ldots, k \} =
\sqcup_{j=1}^hI_j$, $M_{I_j}= \sqcup_{s\in I_j}L_s$ and clearly
$h\leq k$. Let us denote $\widetilde{\widetilde{P}}$ the
associated lumped Markov chain on $\mathcal{M}$. Notice that
$\mathcal{M}$ is coarser than $\mathcal{L}$ as a partition of $X$.
We claim that $\mathcal{M}$ is also a lumping partition of $P$.

Let $x\in L_{s_i}\subseteq M_{I_i}$. We have to prove that
$p(x,M_{I_j})$ is constant on $M_{I_i}$. We have

\begin{eqnarray*}
p(x,M_{I_j})&=&\sum_{y\in M_{I_j}}p(x,y)= \sum_{s\in
I_j}\sum_{y\in
L_s}p(x,y)\\
&=& \sum_{s\in I_j}\widetilde{p}(L_{s_i},L_s) =
\widetilde{\widetilde{p}}(M_{I_i},M_{I_j}),
\end{eqnarray*}
which does not depend on the particular choice of $x\in M_{I_i}$.

\subsection{Generalized crested product of Markov chains}

Let $(I,\preceq)$ be a finite poset. For every $i\in I$, and
$J\subseteq I$, the following subsets of $I$ can be defined
\cite{bayleygeneralized}:
\begin{itemize}
\item $A(i)=\{j\in I : j \succ i\}$, $A[i] = A(i) \sqcup \{i\}$,
$A(J)=\cup_{j\in J}A(j)$, $A[J]= \cup_{j\in J}A[j]$.
\item $H(i)=\{j\in I : j \prec i\}$, $H[i] = H(i) \sqcup \{i\}$, $H(J)=\cup_{j\in J}H(j)$, $H[J]= \cup_{j\in J}H[j]$.
\end{itemize}
A subset $J\subseteq I$ is said {\it ancestral} if, whenever $i
\succ j$ and $j\in J$, then $i\in J$. Note that by definition
$A(i)$ and $A[i]$ are ancestral, for each $i\in I$. The set $A(i)$
is called the ancestral set of $i$, whereas the set $H(i)$ is
called the hereditary set of $i$. Finally, we recall that an
antichain is a subset $S\subseteq I$ in which no two distinct
elements are comparable.

For each $i\in I$, let $X_i$ be a finite set, with $|X_i|\geq 2$,
so that we can identify $X_i$ with the set $\{0,1, \ldots,
|X_i|-1\}$. Let $P_i$ be a Markov chain on $X_i$ (as usual, we
also denote by $P_i$ the associated Markov operator on $L(X_i)$).
Let $I_i=(\delta_i(x_i,y_i))_{x_i,y_i\in X_i}$ be the identity
matrix of size $|X_i|$, and let $U_i=(u_i(x_i,y_i))_{x_i,y_i\in
X_i}$ be the matrix whose entries are all equal to $1/|X_i|$. We
still denote by $I_i$ and $U_i$ the associated Markov operators on
$L(X_i)$, that we call the identity and the uniform operator,
respectively. The generalized crested product is a new Markov
chain defined on the space $X_1\times \cdots \times X_n$.

\begin{defi}[\cite{generalizedcrested}]\label{defgencrestedproduct}
Let $(I,\preceq)$ be a finite poset, with $|I|=n$, and let
$\{p_i^0\}_{i\in I}$ be a strict probability measure on $I$, so
that $p_i^0>0$ for every $i\in I$ and $\sum_{i=1}^np_i^0=1$. The
\textit{generalized crested product} of the Markov chains $P_i$
defined by $(I,\preceq)$ and $\{p_i^0\}_{i\in I}$ is the Markov
chain on $X=X_1\times \cdots \times X_n$ whose associated Markov
operator is
\begin{eqnarray}\label{definizioneiniziale}
\mathcal{P}=\sum_{i\in I}p_i^0 \left(P_i\otimes
\left(\bigotimes_{j\in H(i)} U_j\right)\otimes
\left(\bigotimes_{j\not\in H[i]} I_j\right)\right).
\end{eqnarray}
\end{defi}
The probability transition on $X$ associated with $\mathcal{P}$
will be denoted by $p(x,y)$, for all $x,y\in X$. For $x,y \in X$
we have:
$$
p(x,y) = \sum_{i\in I}p_i^0p_i(x_i,y_i) \prod_{j\in
H(i)}u_j(x_j,y_j) \prod_{j\not \in H[i]}\delta_j(x_j,y_j).
$$
The spectral analysis of these products of Markov chains has been
performed in \cite{generalizedcrested} under the hypothesis of
irreducibility for the $P_i$'s, together with the study of
ergodicity and of the $k$-step transition probability.

\begin{remark}{\bf (The generalized Ehrenfest model) }\rm
The generalized crested product can be seen as a generalization of
the classical Ehrenfest diffusion model. This model consists of
two urns numbered $0$, $1$ and $n$ balls numbered $1, \ldots, n$.
A configuration is given by a placement of the balls into the
urns. Note that there is no ordering inside the urns. At each
step, a ball is randomly chosen (with probability $1/n$) and it is
moved to the other urn. In \cite{crested} we generalized it to the
$(C,N)$-Ehrenfest model. Now put $|X_i|=q$, for each $i=1,\ldots,
n$: then we have the following interpretation of the generalized
crested product. Suppose that we have $n$ balls numbered by $1,
\ldots, n$ and $q$ urns. Let $(I,\preceq)$ be a finite poset with
$n$ elements, so that a hierarchy is introduced in the set of
balls. At each step, we choose a ball $i$ according with a
probability distribution $p_i^0$: then we move it to another urn
following a transition probability $P_i$ and all the other balls
numbered by indices $j$ such that $j\prec i$ in the poset
$(I,\preceq)$ are moved uniformly to a new urn. The balls
corresponding to all the other indices are not moved.
\end{remark}

\subsection{Generalized wreath product of groups}\label{labelgroupsbayley}

Let $(I,\preceq)$ be a finite poset, with $|I|=n$. For each $i\in
I$, let $X_i$ be a finite set, with $|X_i|\geq 2$. For $J\subseteq
I$, put $X_J = \prod_{i\in J}X_i$. In particular, we put $X =
X_I$. If $K\subseteq J \subseteq I$, let $\pi^J_K$ denote the
natural projection from $X_J$ onto $X_K$. In particular, we set
$\pi_J = \pi^I_J$ and $x_J=\pi_J x$, for every $x\in X$. Let
$\mathcal{A}$ be the set of ancestral subsets of $I$. If $J\in
\mathcal{A}$, then the equivalence relation $\sim_J$ on $X$ is
defined as
$$
x \sim_J y \quad \Longleftrightarrow \quad x_J = y_J,\qquad
\mbox{where } x=(x_i)_{i\in I} \ \text{and } y=(y_i)_{i\in I} \in
X.
$$
\begin{defi}[\cite{bayleygeneralized}]\label{defiposetblocks}
A {\it poset block structure} is a pair $(X,\sim_{\mathcal{A}})$,
where
\begin{enumerate}
\item $X = \prod_{(I,\preceq)}X_i$, with $(I,\preceq)$ a
finite poset and $|X_i| \geq 2$, for each $i\in I$;
\item $\sim_{\mathcal{A}}$ denotes the set of equivalence relations on
$X$ defined by all the ancestral subsets of $I$.
\end{enumerate}
\end{defi}
For each $i\in I$, let $G_i$ be a permutation group on $X_i$ and
let $F_i$ be the set of all functions from $X_{A(i)}$ into $G_i$.
For $J\subseteq I$, we put $F_J = \prod_{i\in J}F_i$. An element
of $F_I$ will be denoted $f = (f_i)_{i\in I}$, with $f_i \in F_i$.

\begin{defi}\label{labelbayley}
For each $f\in F_I$, the action of $f$ on $X$ is defined by
\begin{eqnarray*}
f x  = y, \qquad \text{with }\ y_i =(f_i x_{A(i)}) x_i, \quad
\mbox{for each }i\in I.
\end{eqnarray*}
\end{defi}
It is easy to verify that this is a faithful action of $F_I$ on
$X$. Therefore $(F_I,X)$ is a permutation group, called the {\it
generalized wreath product of the permutation groups
$(G_i,X_i)_{i\in I}$}.

\begin{defi}
An automorphism of a poset block structure
$(X,\sim_{\mathcal{A}})$ is a permutation $\sigma$ of $X$ such
that, for every equivalence relation $\sim_J$ in
$\sim_{\mathcal{A}}$,
$$
x \sim_J y \qquad \Longleftrightarrow \qquad (\sigma x)\sim_J
(\sigma y), \qquad \mbox{for all } x, y \in X.
$$
\end{defi}
Denote by $Sym(X_i)$ the symmetric group of $X_i$. If $|X_i|=q_i$,
we will also write $Sym(q_i)$. The following fundamental results
are proven in \cite{bayleygeneralized}.
\begin{thm}\label{thmgwp}
The generalized wreath product of the permutation groups $(G_i,
X_i)_{i\in I}$ is transitive on $X$ if and only if $(G_i, X_i)$ is
transitive for each $i\in I$. If $(X, \sim_{\mathcal{A}})$ is the
poset block structure associated with the poset $(I,\preceq)$ and
$F_I$ is the generalized wreath product
$\prod_{(I,\preceq)}Sym(X_i)$, then $F_I$ is the automorphism
group of $(X,\sim_{\mathcal{A}})$.
\end{thm}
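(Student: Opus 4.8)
The statement has two parts: a transitivity criterion and an identification of the full automorphism group of the poset block structure. I would treat them in that order, since the second part uses the combinatorial machinery that the first part introduces.

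For the transitivity claim, the ``only if'' direction is immediate: if some $(G_i,X_i)$ fails to be transitive, pick $x_i, y_i \in X_i$ in different $G_i$-orbits, and observe from Definition \ref{labelbayley} that the $i$-th coordinate of $fx$ is $(f_i x_{A(i)})x_i$, which lies in the $G_i$-orbit of $x_i$ for every $f$; hence no element of $F_I$ can carry a point with $i$-th coordinate $x_i$ to one with $i$-th coordinate $y_i$. For the ``if'' direction I would argue by induction on $|I|$, processing the poset from the top down. Choose a maximal element $m \in I$; then $A(m) = \emptyset$, so $F_m$ is just a copy of $G_m$ acting on the single coordinate $X_m$ independently of everything else, while $I \setminus \{m\}$ is again a poset whose generalized wreath product acts on $X_{I\setminus\{m\}}$. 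Given $x, y \in X$, first use the induction hypothesis to fix up all coordinates in $I \setminus \{m\}$; once those agree, the value $x_{A(m)} = y_{A(m)}$ is determined, so one can choose $f_m$ (using transitivity of $G_m$) to send the $m$-th coordinate correctly without disturbing the rest. Composing these maps gives the desired element of $F_I$. The bookkeeping point to be careful about is that the functions $f_i$ for $i \neq m$ have domain $X_{A(i)}$ with $A(i) \subseteq I \setminus \{m\}$ (since $m$ is maximal it is not above anything), so the induction is genuinely internal to the sub-poset.

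For the second part I must show two inclusions. One is routine: every $f \in F_I$ preserves each relation $\sim_J$ for $J$ ancestral, because $(fx)_J$ depends only on $x_J$ — indeed $(f_i x_{A(i)})x_i$ depends only on coordinates in $A[i] \subseteq J$ whenever $i \in J$ and $J$ is ancestral — so $x \sim_J y$ forces $fx \sim_J fy$, and applying the same to $f^{-1}$ gives the equivalence. The substantive inclusion is that an arbitrary automorphism $\sigma$ of $(X, \sim_{\mathcal A})$ lies in $F_I = \prod_{(I,\preceq)} Sym(X_i)$. Here I would again induct on $|I|$, peeling off a maximal element $m$. Since $A[m] = \{m\}$ is ancestral and $I \setminus \{m\}$ is ancestral, $\sigma$ descends to a permutation $\bar\sigma$ of $X_{I\setminus\{m\}}$ (well-defined because $\sigma$ respects $\sim_{I\setminus\{m\}}$), and by induction $\bar\sigma$ comes from an element of the generalized wreath product over the sub-poset. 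Correcting $\sigma$ by that element, I may assume $\sigma$ acts as the identity on the $I\setminus\{m\}$-coordinates; it then permutes, within each fiber, only the $m$-coordinate, i.e.\ $\sigma x = (x_{I\setminus\{m\}}, \, \tau_{x_{A(m)}}(x_m))$ for some permutation $\tau_{x_{A(m)}}$ of $X_m$ — and crucially $\tau$ depends only on $x_{A(m)}$ rather than on all of $x_{I\setminus\{m\}}$, precisely because $\sigma$ must preserve every $\sim_J$ with $J = A[m] \cup (\text{stuff})$; unwinding which ancestral sets separate which coordinates is what pins down the dependence. That exhibits $\sigma$ as the image of an element $f$ with $f_m \in Sym(X_m)$ a function of $X_{A(m)}$ and $f_i = \mathrm{id}$ otherwise.

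The main obstacle I anticipate is the dependence-tracking in the last step: showing that once the lower coordinates are trivialised, the residual action on $X_m$ is a function of $x_{A(m)}$ alone. This is the one place where the full list of ancestral sets — not just $A[i]$ — does real work, and getting the quantifiers right (which pair of points must be $\sim_J$-equivalent for which ancestral $J$, and what that forces on $\tau$) is the delicate part; everything else is straightforward induction and unwinding of Definition \ref{labelbayley}. Since this is the foundational result of \cite{bayleygeneralized}, I would also be prepared to simply cite it and sketch only the ideas, but the inductive peeling-off of a maximal element is the natural self-contained route.
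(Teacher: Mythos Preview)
The paper does not give its own proof of this theorem; it is stated as a result quoted from \cite{bayleygeneralized} (see the sentence immediately preceding the statement). So your closing remark --- that one could simply cite the result --- is exactly what the paper does.

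Your self-contained sketch is along the right lines, but there is a systematic confusion between \emph{maximal} and \emph{minimal} elements that would break the argument as written. You choose $m$ maximal and correctly note $A(m)=\emptyset$, but then assert that for $i\neq m$ one has $A(i)\subseteq I\setminus\{m\}$ ``since $m$ is maximal it is not above anything''. That is false: a maximal element may well lie above other elements (maximal means nothing lies above \emph{it}); it is a \emph{minimal} element that lies above nothing. The same slip recurs in the second part: for maximal $m$ the set $I\setminus\{m\}$ is in general \emph{not} ancestral (take $I=\{1,2\}$ with $1\succ 2$ and $m=1$: then $I\setminus\{m\}=\{2\}$ fails to contain $1\succ 2$), so your automorphism $\sigma$ need not descend to a well-defined map of $X_{I\setminus\{m\}}$, and the induction cannot start.

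Both inductions run cleanly if you instead peel off a \emph{minimal} element $m$. Then $m\notin A(i)$ for every $i$, so $A(i)$ computed in $I$ and in $I\setminus\{m\}$ coincide and the generalized wreath product over $I\setminus\{m\}$ embeds in $F_I$ by setting $f_m=\mathrm{id}$; the set $I\setminus\{m\}$ is genuinely ancestral, so $\sigma$ descends; and after trivialising those coordinates, the residual permutation of $X_m$ depends only on $x_{A(m)}$ because $\sigma$ preserves $\sim_{A[m]}$ --- which is precisely the dependence-tracking you already anticipated. With that single fix (minimal in place of maximal) your outline is correct.
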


\begin{example}
If $\preceq$ is the identity relation (Fig. \ref{figure13}), then
the generalized wreath product is the permutation direct product
$(G_1,X_1)\times(G_2,X_2)\times \cdots \times(G_n,X_n)$. In this
case, we have $A(i) = \emptyset$, for each $i\in I$, so that an
element $f$ of $F_I$ is given by $f = (f_i)_{i\in I}$, where $f_i$
is a function from a singleton $\{\ast\}$ into $G_i$ and so its
action on $x_i$ does not depend on any other coordinate of $x$.
\begin{figure}[h]
\begin{picture}(300,30)
\put(80,20){$\bullet$}\put(110,20){$\bullet$}\put(140,20){$\bullet$}\put(150,23){\circle*{1}}
\put(160,23){\circle*{1}}\put(170,23){\circle*{1}}\put(180,23){\circle*{1}}\put(190,23){\circle*{1}}
\put(200,23){\circle*{1}}\put(210,23){\circle*{1}}\put(220,20){$\bullet$}
\put(78,8){$1$}\put(110,8){$2$}\put(140,8){$3$}\put(220,8){$n$}
\end{picture}\caption{}\label{figure13}
\end{figure}

If $(I,\preceq)$ is a finite chain (Fig. \ref{figure14}), then the
generalized wreath product is the classical permutation wreath
product $(G_1,X_1)\wr(G_2,X_2)\wr \cdots \wr(G_n,X_n)$. In this
case, we have $A(i) = \{1,2,\ldots,i-1\}$, for each $i\in I$, so
that an element $f\in F_I$ is given by $f = (f_i)_{i\in I}$, with
$$
f_i:X_1 \times \cdots \times X_{i-1}\longrightarrow G_i
$$
In other words, the action of $f$ on $x_i$ depends on its \lq\lq
ancestral\rq\rq coordinates $x_1, \ldots, x_{i-1}$.
\begin{figure}[h]
\begin{picture}(250,110)
\letvertex A=(125,100)\letvertex B=(125,80)\letvertex C=(125,60)
\letvertex D=(125,55)\letvertex E=(125,50)\letvertex F=(125,45)
\letvertex G=(125,40)\letvertex H=(125,35)\letvertex I=(125,15)

\drawvertex(A){$\bullet$}\drawvertex(B){$\bullet$}\drawvertex(C){$\bullet$}\drawvertex(H){$\bullet$}
\drawvertex(I){$\bullet$}
\drawvertex(D){\circle*{1}}\drawvertex(E){\circle*{1}}\drawvertex(F){\circle*{1}}\drawvertex(G){\circle*{1}}
\drawundirectededge(A,B){}\drawundirectededge(B,C){}
\drawundirectededge(H,I){}
\put(129,97){$1$}\put(129,77){$2$}\put(129,57){$3$}\put(130,32){$n-1$}\put(129,13){$n$}
\end{picture}\caption{}\label{figure14}
\end{figure}
\end{example}
\begin{remark}
In \cite{donnosubmitted}, the generalized wreath product of graphs
has been introduced (a different notion with respect to
\cite{erschler}), allowing to get the Cayley graph of a
generalized wreath product of groups from the Cayley graphs of the
single factor groups).
\end{remark}

\section{Lumping the generalized crested product}\label{sectionlumpings}

In this section we present three ways for lumping a generalized
crested product of Markov chains. The first one is obtained by
deleting coordinates indexed by some elements of $I$: this is a
completely general method that can be applied without any
assumption on the Markov chains $P_i$'s. On the other hand, the
two other lumping constructions are realized starting from a
lumping partition of each factor set $X_i$: these are the direct
product of lumping partitions, and the generalized product of
lumping partitions, that reflects the ancestral equivalence
relations defined by the poset.

\subsection{Deleting coordinates}\label{forgetting}

Let $(I,\preceq)$ be a finite poset, with $|I|=n$ and let $X_i$,
$X$, $P_i$, $p_i^0$ be as in Definition
\ref{defgencrestedproduct}. Consider the generalized crested
product of Markov chains $\mathcal{P}$ defined in
\eqref{definizioneiniziale}. Observe that any element of $X$ can
be written as $x=(x_1,\ldots,x_n)$, with $x_i\in X_i$.\\
\indent The construction that we are going to introduce is quite
natural. More precisely, we want to lump the Markov chain
$\mathcal{P}$ by \lq\lq deleting\rq\rq some elements from the
poset $I$. Let $R\subseteq I$ and put $\widetilde{I}= I\setminus
R$. We declare equivalent any two elements $(x_1,\ldots,x_n)$ and
$(y_1,\ldots,y_n)$ of $X$ such that $x_i=y_i$ for each $i\in
\widetilde{I}$. This construction produces a lumping consisting of
$t=\prod_{i\in \widetilde{I}}|X_i|$ parts $L_1,\ldots, L_t$ of the
same cardinality $\prod_{i\in R}|X_i|$.

\begin{thm}\label{forgetisgood}
The function $x\mapsto \mathcal{P}(x,L_s)$ is constant on $L_k$,
for every $k,s=1,\ldots, t$.
\end{thm}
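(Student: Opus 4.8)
The plan is to verify the defining lumpability condition directly from the closed formula for the transition probability $p(x,y)$ of the generalized crested product. Fix a part $L_s$ of the partition; by construction $L_s = \{\, y\in X : y_j = c_j \text{ for all } j\in \widetilde{I}\,\}$ for a suitable tuple $(c_j)_{j\in\widetilde{I}}$, and the $t$ parts of the partition are exactly the fibres of the projection $x\mapsto (x_j)_{j\in\widetilde{I}}$. So it suffices to show that $\mathcal{P}(x,L_s)=\sum_{y\in L_s}p(x,y)$ depends on $x$ only through the coordinates $x_j$ with $j\in\widetilde{I}$; equivalently, by Proposition \ref{lumpableproperties}, that $\mathcal{P}\,{\bf 1}_{L_s}\in W_{\mathcal{L}}$. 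First I would interchange the two finite sums, over $i\in I$ and over $y\in L_s$, writing
$$\mathcal{P}(x,L_s)=\sum_{i\in I}p_i^0\sum_{y\in L_s} p_i(x_i,y_i)\prod_{j\in H(i)}u_j(x_j,y_j)\prod_{j\notin H[i]}\delta_j(x_j,y_j).$$

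For a fixed $i$, summing over $y\in L_s$ means summing $y_j$ over all of $X_j$ when $j\in R$ and freezing $y_j=c_j$ when $j\in\widetilde{I}$, so the inner sum factorises over the coordinates of $X$. The three types of coordinate then behave as follows, using the elementary identities $\sum_{y_j\in X_j}\delta_j(x_j,y_j)=1$ and $\sum_{y_j\in X_j}u_j(x_j,y_j)=1$ together with $u_j(x_j,c_j)=1/|X_j|$ and the fact that each $P_i$ is stochastic: every coordinate $j\in R$ is summed out and contributes the constant $1$; a coordinate $j\in H(i)\cap\widetilde{I}$ contributes the constant $1/|X_j|$; a coordinate $j\in (I\setminus H[i])\cap\widetilde{I}$ contributes $\delta_j(x_j,c_j)$; and the distinguished coordinate $i$ contributes $p_i(x_i,c_i)$ if $i\in\widetilde{I}$ and $1$ if $i\in R$. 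Hence the inner sum equals a product in which every surviving factor involves only coordinates $x_j$ with $j\in\widetilde{I}$.

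Summing over $i\in I$ with the weights $p_i^0$ then exhibits $\mathcal{P}(x,L_s)$ as a function of $(x_j)_{j\in\widetilde{I}}$ alone, which is precisely the assertion that $x\mapsto\mathcal{P}(x,L_s)$ is constant on each $L_k$; along the way one also reads off the entries $\widetilde{p}(L_k,L_s)$ of the lumped chain. The argument uses no irreducibility or reversibility hypothesis on the $P_i$'s, in keeping with the announced generality of the deletion construction. The only thing requiring care is the bookkeeping that separates the coordinate sets $\{i\}$, $H(i)$, $I\setminus H[i]$ according to the partition $I=R\sqcup\widetilde{I}$, and the observation that this separation is exactly what makes every $R$-indexed coordinate drop out after summation while leaving only $\widetilde{I}$-dependence; that combinatorial case split is the main (and essentially the only) point of substance.
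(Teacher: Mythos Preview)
Your proof is correct and follows essentially the same approach as the paper: both interchange the sums over $i\in I$ and $y\in L_s$ and then, for each fixed $i$, separate the coordinate indices according to whether they lie in $R$ or $\widetilde{I}$ to see that all $R$-dependence drops out. Your organization is slightly cleaner in that you exploit the product structure $L_s=\prod_{j\in R}X_j\times\prod_{j\in\widetilde{I}}\{c_j\}$ to factorize the inner sum coordinate-by-coordinate, whereas the paper keeps the sum over $y\in L_s$ intact, splits the delta product as $\prod_{j\notin(H[i]\cup R)}\delta_j\cdot\prod_{j\in R\setminus H[i]}\delta_j$, and then does an explicit case split on $i\in R$ versus $i\notin R$ with some cardinality bookkeeping; but the substance is the same.
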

\begin{proof} Let $x=(x_1,\ldots,x_n)\in L_k$. We have
\begin{eqnarray*}
\mathcal{P}(x,L_s) &=& \sum_{y\in L_s}p(x,y)=\sum_{y\in L_s}\sum_{i\in I}p_i^0 p_i(x_i,y_i)\prod_{j\in H(i)}u_j(x_j,y_j)\prod_{j\not\in H[i]}\delta_j(x_j,y_j)\\
 &=& \sum_{i\in I}p_i^0\sum_{y\in L_s} p_i(x_i,y_i)\prod_{j\in H(i)}\frac{1}{|X_j|}\prod_{j\not\in H[i]}\delta_j(x_j,y_j).
\end{eqnarray*}
Fixed an index $i$, we have
\begin{eqnarray*}
& & p_i^0\sum_{y\in L_s} p_i(x_i,y_i)\prod_{j\in H(i)}\frac{1}{|X_j|}\prod_{j\not\in H[i]}\delta_j(x_j,y_j)\\
&=& p_i^0 \prod_{j\in H(i)}\frac{1}{|X_j|}\left(\sum_{y\in L_s}p_i(x_i,y_i) \prod_{j\not\in H[i]}\delta_j(x_j,y_j)\right)\\
&=& p_i^0 \prod_{j\in H(i)}\frac{1}{|X_j|}\left(\sum_{y\in L_s}p_i(x_i,y_i) \prod_{j\not\in (H[i]\cup R)}\delta_j(x_j,y_j)\prod_{j\in R\setminus H[i]}\delta_j(x_j,y_j)\right)\\
&=& p_i^0 \prod_{j\in H(i)}\frac{1}{|X_j|}\prod_{j\not\in
(H[i]\cup R)}\delta_j(x_j,y_j)\left(\sum_{y\in L_s}p_i(x_i,y_i)
\prod_{j\in R\setminus H[i]}\delta_j(x_j,y_j)\right),
\end{eqnarray*}
since if $j\not\in R$ all the elements $y\in L_s$ have the same
$j$-th coordinate. We can distinguish two cases. If $i\in R$, then
$$
\sum_{y\in L_s}p_i(x_i,y_i)\prod_{j\in R\setminus
H[i]}\delta_j(x_j,y_j) = \sum_{y\in \overline{L}_s}p_i(x_i,y_i)=
 \prod_{j\in R\cap H(i)} |X_j|,
$$
with $\overline{L}_s=\{y\in L_s\ |\ y_j=x_j \text{ for all }j\in
R\setminus H[i]\}$; if $i\not\in R$, then
$$
\sum_{y\in L_s}p_i(x_i,y_i)\prod_{j\in R\setminus
H[i]}\delta_j(x_j,y_j)=\sum_{y\in \overline{L}_s}p_i(x_i,y_i)=
p_i(x_i,y_i)\prod_{j\in R\cap H[i]} |X_j|.
$$
In both cases the sum is independent of $x\in L_k$, since any
$x\in L_k$ has the same $i$-th coordinate if $i\not\in R$.
\end{proof}

In what follows, we consider the generalized crested product in
which any Markov operator $P_i$ is the uniform operator $U_i$. The
following proposition explains how the operator $\mathcal{P}$
defined in \eqref{definizioneiniziale} changes after performing
the lumping described in Theorem \ref{forgetisgood}. In practice,
we are removing from the tensor product the operators
corresponding to the indices in $R$.

\begin{prop}\label{propositionR}
Let $(I,\preceq)$ be a finite poset and $R\subseteq I$. Let
$\mathcal{P}$ be the generalized crested product of Markov chains
defined in \eqref{definizioneiniziale}, with $P_i=U_i$, for each
$i\in I$. Denote by $\widetilde{\mathcal{P}}$ the lumped Markov
chain defined by $R$ as in Theorem \ref{forgetisgood}. Then
$$
\widetilde{\mathcal{P}}=\sum_{i\in I}p_i^0\left(\bigotimes_{j\in
H[i]\setminus R}U_j\right)\otimes \left(\bigotimes_{j\not\in
(H[i]\cup R)} I_j\right).
$$
\end{prop}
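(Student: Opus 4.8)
The plan is to read off $\widetilde{\mathcal{P}}$ as the restriction of $\mathcal{P}$ to an invariant subspace. By Theorem \ref{forgetisgood}, $\mathcal{P}$ is lumpable with respect to the deletion partition $\mathcal{L}=\{L_1,\dots,L_t\}$, so by Proposition \ref{lumpableproperties} the subspace $W_{\mathcal{L}}\subseteq L(X)$ of functions constant on each part is $\mathcal{P}$-invariant and $\widetilde{\mathcal{P}}$ is exactly $\mathcal{P}|_{W_{\mathcal{L}}}$ written in the basis $\{\mathbf{1}_{L_i}\}$ (equivalently, $\{\delta_{L_i}\}$). Since here $P_i=U_i$, the $i$-th summand of $\mathcal{P}$ is $p_i^0\big(\bigotimes_{j\in H[i]}U_j\big)\otimes\big(\bigotimes_{j\notin H[i]}I_j\big)$, and the statement amounts to computing how such a tensor product behaves on $W_{\mathcal{L}}$.

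First I would fix the tensor bookkeeping. In $L(X)=\bigotimes_{i\in I}L(X_i)$ a function is constant on every part of $\mathcal{L}$ exactly when it is independent of the coordinates in $R$, so $W_{\mathcal{L}}=\bigotimes_{j\in\widetilde{I}}L(X_j)\otimes\bigotimes_{j\in R}\mathbb{C}\,\mathbf{1}_{X_j}$, canonically identified with $L(X_{\widetilde{I}})$; under this identification $\mathbf{1}_{L_i}=\big(\bigotimes_{j\in\widetilde{I}}\delta_{(c_i)_j}\big)\otimes\big(\bigotimes_{j\in R}\mathbf{1}_{X_j}\big)$ corresponds to the Dirac function $\delta_{c_i}$ at the label $c_i\in X_{\widetilde{I}}$ of $L_i$, which is precisely the identification under which $\mathcal{P}|_{W_{\mathcal{L}}}$ coincides with the lumped chain $\widetilde{\mathcal{P}}$ of Theorem \ref{forgetisgood} (this is the content of $\mathcal{P}(\mathbf{1}_{L_i})(x)=\widetilde{\mathcal{P}}(\delta_{L_i})(L_j)$ in Proposition \ref{lumpableproperties}).

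Next I would evaluate the $i$-th summand on a pure tensor $\bigotimes_i f_i\in W_{\mathcal{L}}$, i.e. one with $f_j=\mathbf{1}_{X_j}$ for every $j\in R$. Factorwise: every factor indexed by $j\in R$ is unchanged, since $U_j\mathbf{1}_{X_j}=I_j\mathbf{1}_{X_j}=\mathbf{1}_{X_j}$; a factor indexed by $j\in\widetilde{I}$ becomes $U_jf_j$ if $j\in H[i]$ and stays $f_j$ if $j\notin H[i]$. Hence the image is again in $W_{\mathcal{L}}$, and using $\widetilde{I}\cap H[i]=H[i]\setminus R$ and $\widetilde{I}\setminus H[i]=I\setminus(H[i]\cup R)$, under the identification $W_{\mathcal{L}}\cong L(X_{\widetilde{I}})$ the $i$-th summand restricts to $p_i^0\big(\bigotimes_{j\in H[i]\setminus R}U_j\big)\otimes\big(\bigotimes_{j\notin(H[i]\cup R)}I_j\big)$. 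Summing over $i\in I$ and extending by linearity (pure tensors span $W_{\mathcal{L}}$) gives the claimed formula.

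The step to watch is the one used implicitly above: that the canonical identification $W_{\mathcal{L}}\cong L(X_{\widetilde{I}})$ is exactly the one for which the restricted operator equals the lumped chain $\widetilde{\mathcal{P}}$ — but this is Proposition \ref{lumpableproperties}, so nothing new is required. If one prefers a purely computational route, one can instead resume the calculation in the proof of Theorem \ref{forgetisgood} with $p_i(x_i,y_i)=1/|X_i|$: distinguishing $i\in R$ from $i\notin R$, the $i$-th contribution to $\widetilde{p}(L_k,L_s)$ collapses, after the cancellation $\prod_{j\in H(i)}\frac{1}{|X_j|}\cdot\prod_{j\in R\cap H(i)}|X_j|=\prod_{j\in H(i)\setminus R}\frac{1}{|X_j|}$ and absorption of the leftover factor $1/|X_i|$, to $p_i^0\prod_{j\in H[i]\setminus R}\frac{1}{|X_j|}\prod_{j\in\widetilde{I}\setminus H[i]}\delta_j(x_j,y_j)$, which is the $(L_k,L_s)$-entry of the asserted tensor product; the only delicate point is keeping track of which $|X_j|$ factors survive in each of the two cases.
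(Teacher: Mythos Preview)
Your argument is correct, and it takes a genuinely different route from the paper's proof. The paper proceeds by direct computation of transition probabilities: it first reduces to the case $|R|=1$ (arguing that the general case follows by iterated lumping), writes $\mathcal{P}(x,L_s)$ as a sum over $i\in I$, and then for each $i$ splits into the two cases $r\in H[i]$ and $r\notin H[i]$; in the first case the sum $\sum_{y_r}\frac{1}{|X_r|}$ collapses to $1$, in the second the sum $\sum_{y_r}\delta_r(x_r,y_r)$ collapses to $1$, and either way the $r$-factor disappears from the tensor. Your approach instead exploits Proposition~\ref{lumpableproperties} to identify $\widetilde{\mathcal{P}}$ with $\mathcal{P}|_{W_{\mathcal{L}}}$, recognizes $W_{\mathcal{L}}$ as $\bigotimes_{j\in\widetilde{I}}L(X_j)\otimes\bigotimes_{j\in R}\mathbb{C}\,\mathbf{1}_{X_j}$, and reads off the restriction factorwise from the single observation $U_j\mathbf{1}_{X_j}=I_j\mathbf{1}_{X_j}=\mathbf{1}_{X_j}$. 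This handles all of $R$ at once, avoids the inductive reduction, and makes transparent why the formula has the shape it does: the $R$-factors are simply absorbed. The paper's computation is more hands-on and does not rely on the operator characterization of lumping, which may be preferable for readers less comfortable with the tensor formalism; your version is shorter and more structural. Your closing paragraph, incidentally, sketches essentially the paper's computation (without the reduction to $|R|=1$), so you have in effect given both proofs.
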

\begin{proof} By performing successive lumpings, it suffices to prove the statement for $|R|=1$. Let
$R=\{r\}$. Recall that the lumping associated with $R$ is obtained
by identifying the elements $x$ and $y$ such that $x_i=y_i$ for
each $i\neq r$. Let $L_s$ be a part of the corresponding
partition. We have
\begin{eqnarray*}
\mathcal{P}(x,L_s) &=& \sum_{i\in I}p_i^0\sum_{y\in L_s}
\prod_{j\in H[i]}\frac{1}{|X_j|}\prod_{j\not\in
H[i]}\delta_j(x_j,y_j).
\end{eqnarray*}
Fixed an index $i$, we can distinguish two cases. If $r\in H[i]$,
we get
\begin{eqnarray*}
p_i^0\sum_{y\in L_s} \prod_{j\in
H[i]}\frac{1}{|X_j|}\prod_{j\not\in H[i]}\delta_j(x_j,y_j)&=&
p_i^0 \left(\sum_{y_r\in X_r} \frac{1}{|X_r|}\right) \prod_{j\in H[i]\setminus \{r\}}\frac{1}{|X_j|}\prod_{j\not\in H[i]}\delta_j(x_j,y_j)\\
&=& p_i^0 \prod_{j\in
H[i]\setminus\{r\}}\frac{1}{|X_j|}\prod_{j\not\in
H[i]}\delta_j(x_j,y_j),
\end{eqnarray*}
where, for $j\not\in H[i]$, $y_j$ is the $j$-th coordinate of any
element $y\in L_s$. If $r\not\in H[i]$, we get
\begin{eqnarray*}
 p_i^0\sum_{y\in L_s} \prod_{j\in H[i]}\frac{1}{|X_j|}\prod_{j\not\in H[i]}\delta_j(x_j,y_j)&=&
 p_i^0  \prod_{j\in H[i]} \frac{1}{|X_j|}\prod_{j\not\in (H[i]\cup \{r\})}\delta_j(x_j,y_j)\left(\sum_{y_r\in X_r}
 \delta_r(x_r,y_r)\right)\\
&=& p_i^0 \prod_{j\in H[i]}\frac{1}{|X_j|}\prod_{j\not\in
(H[i]\cup \{r\})}\delta_j(x_j,y_j),
\end{eqnarray*}
where, for $j\not\in (H[i]\cup \{r\})$, $y_j$ is the $j$-th
coordinate of any element $y\in L_s$. By summing up the terms
corresponding to every $i$, we get the assertion.
\end{proof}
A natural question to be asked is under what conditions, after
performing such a lumping, the lumped Markov chain
$\widetilde{\mathcal{P}}$ represents the generalized crested
product with respect to the reduced poset
$(\widetilde{I},\preceq)$ obtained from $(I,\preceq)$ by deleting
the elements in $R$ and preserving the remaining order relations.
We will use the notation $i\lhd j$ by meaning of $i\prec j$ and
there is no $k\in I$ such that $i\prec k\prec j$.
\begin{defi}
Let $R\subseteq I$ and suppose that, for each $r\in R$, there
exists a unique $s(r) \in H[R]\setminus R$, maximal in
$H[R]\setminus R$, and a sequence $\mathcal{C}(r)=\{r, r',\ldots,
r^{(\ell)}, s(r)\}\subseteq I$, with $r',\ldots, r^{(\ell)}\in R$,
such that $r'\lhd r$, $r^{(k)}\lhd r^{(k-1)}$ and $s(r)\lhd
r^{(\ell)}$. We say that $R$ is {\it fibered} over $S=\cup_{r\in
R}\{s(r)\}$.
\end{defi}
Notice that the chain $\mathcal{C}(r)$ connecting $r$ with $s(r)$
is not necessarily unique. Moreover,
\begin{eqnarray}\label{addedformula}
H(r)\setminus R=\{s(r)\}\sqcup H(s(r))\equiv H[s(r)].
\end{eqnarray}
Roughly speaking, $s(r)$ is the unique maximal element in
$H(r)\setminus R$ that we meet descending from $r$, and there is
no element of $R$ in $H[s(r)]$. Moreover, $s(r)$ is the {\it
maximum} of $H(r)\setminus R$ and among the descendents of $s(r)$
there cannot be elements of $R$ (this is a consequence of the
maximality property of $s(r')$ for all $r'\in R$). If
$S=\{s_1,\ldots, s_t\}$, the {\it fiber} $R_i$ of $s_i$ is the set
of $r\in R$ such that (any) $\mathcal{C}(r)$ ends at $s_i$. If $R$
is fibered over $S$ then $R=\sqcup_{i=1}^t R_i$. In the poset
$(I,\preceq)$ in Fig. \ref{figureRS}, for example, the set
$R=\{2,3,4\}$ is fibered over $S=\{5,6\}$, with fibers
$R_5=\{2,3\}$ and $R_6=\{4\}$; the corresponding reduced poset is
denoted by $(\widetilde{I},\preceq)$. On the other hand, the set
$R'=\{1,2,3,4\}$ is not fibered over any subset $S$, since there
are two maximal elements in $H(1)\setminus R$.
\begin{figure}[h]
\begin{picture}(300,90)
\letvertex A=(70,85)\letvertex B=(20,45)
\letvertex C=(70,45)\letvertex D=(120,45)
\letvertex E=(45,5)\letvertex F=(120,5)

\letvertex AA=(210,70)\letvertex BB=(180,40)
\letvertex CC=(240,40)
\drawvertex(AA){$\bullet$}\drawvertex(BB){$\bullet$}
\drawvertex(CC){$\bullet$} \put(207,75){$1$}\put(176,25){$5$}
\put(236,25){$6$}

\put(-40,50){$(I,\preceq)$}\put(260,50){$(\widetilde{I},\preceq)$}

\drawvertex(A){$\bullet$}\drawvertex(B){$\bullet$}
\drawvertex(C){$\bullet$}\drawvertex(D){$\bullet$}
\drawvertex(E){$\bullet$}\drawvertex(F){$\bullet$}

\drawundirectededge(AA,BB){} \drawundirectededge(AA,CC){}
\drawundirectededge(A,B){} \drawundirectededge(A,C){}
\drawundirectededge(A,D){}\drawundirectededge(E,B){}\drawundirectededge(E,C){}
\drawundirectededge(F,D){}

\put(60,83){$1$}\put(9,43){$2$}
\put(59,43){$3$}\put(124,43){$4$}\put(35,0){$5$}\put(124,0){$6$}
\end{picture}
\caption{}\label{figureRS}
\end{figure}

\begin{prop}\label{deletingreduced}
Let $(I,\preceq)$ and $\mathcal{P}$ be as in
\eqref{definizioneiniziale}, with $P_i=U_i$, for each $i\in I$.
Let $R\subseteq I$ and $\widetilde{\mathcal{P}}$ be the
corresponding lumped Markov chain as in Proposition
\ref{propositionR}. Then $\widetilde{\mathcal{P}}$ is the
generalized crested product associated with the poset
$(\widetilde{I}, \preceq)$ if and only if there exists
$S=\{s_1,\ldots, s_t\}\subseteq H[R]\setminus R$ such that $R$ is
fibered over $S$. If this is the case, one has
$$
\widetilde{p}^0_i=\begin{cases}
p_i^0&\text{if}\  i\not\in S\\
\sum_{j\in R_{s_h}} p^0_j+ p_{s_h}^0&\text{if}\ i=s_h.
\end{cases}
$$
\end{prop}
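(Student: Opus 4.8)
The plan is to reduce the statement to a comparison of two explicit linear combinations of the ``elementary'' operators
$$
T_J\ :=\ \left(\bigotimes_{j\in J}U_j\right)\otimes\left(\bigotimes_{j\in\widetilde{I}\setminus J}I_j\right),\qquad J\subseteq\widetilde{I},
$$
acting on $L(X_{\widetilde{I}})$. By Proposition \ref{propositionR} we already have $\widetilde{\mathcal{P}}=\sum_{i\in I}p_i^0\,T_{H[i]\setminus R}$. On the other hand, writing $\widetilde{H}[i']$ for the hereditary set of $i'$ computed in the reduced poset $(\widetilde{I},\preceq)$ — so that $\widetilde{H}[i']=H[i']\setminus R$ for $i'\in\widetilde{I}$ — the generalized crested product attached to $(\widetilde{I},\preceq)$ with $P_{i'}=U_{i'}$ and a strict measure $\widetilde{p}^0$ is exactly $\mathcal{Q}:=\sum_{i'\in\widetilde{I}}\widetilde{p}^0_{i'}\,T_{\widetilde{H}[i']}$. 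The first ingredient I would isolate is the linear-algebra lemma that $\{T_J\}_{J\subseteq\widetilde{I}}$ is a linearly independent family of operators: this holds because $U_j\neq I_j$ for $|X_j|\geq2$, and a tensor product of linearly independent families is linearly independent. The second, elementary, remark is that $i'\mapsto\widetilde{H}[i']$ is injective on $\widetilde{I}$ by antisymmetry of $\preceq$.

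Granting these two facts, and using that all $p_i^0>0$ while $\widetilde{p}^0$ must be strict, matching the coefficients of the $T_J$ shows that $\widetilde{\mathcal{P}}=\mathcal{Q}$ holds for some strict $\widetilde{p}^0$ if and only if
$$
(\star)\qquad\text{for every } i\in I\ \text{ there is } i'\in\widetilde{I}\ \text{ with } H[i]\setminus R=H[i']\setminus R ;
$$
and when $(\star)$ holds the measure is forced: $\widetilde{p}^0_{i'}=\sum_{i\in I:\,H[i]\setminus R=\widetilde{H}[i']}p_i^0$, which is automatically positive (the term $i=i'$ always occurs) and sums to $\sum_{i\in I}p_i^0=1$ by the injectivity above. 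So the statement is reduced to the purely order-theoretic equivalence between $(\star)$ and the existence of $S\subseteq H[R]\setminus R$ over which $R$ is fibered, plus the identification of $\widetilde{p}^0$.

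For $i\notin R$ condition $(\star)$ is satisfied trivially by $i'=i$, so only the indices $r\in R$ matter, and there $H[r]\setminus R=H(r)\setminus R$. For such $r$, condition $(\star)$ says exactly that $H(r)\setminus R$ is a principal down-set of $(\widetilde{I},\preceq)$, equivalently that it has a maximum, which I would call $s(r)$. Conversely, assuming $R$ fibered over $S$, formula \eqref{addedformula} gives $H[r]\setminus R=H(r)\setminus R=H[s(r)]=\widetilde{H}[s(r)]$ (the last equality because $H[s(r)]\cap R=\emptyset$), so $(\star)$ holds; this is the easy direction. For the other direction I would, starting from $(\star)$, set $S=\bigcup_{r\in R}\{s(r)\}$ with $s(r)=\max(H(r)\setminus R)$, and then verify the clauses defining ``fibered'': $s(r)\prec r$ and $s(r)\in H[R]\setminus R$ are immediate; for the connecting chain $\mathcal{C}(r)$ one observes that any saturated chain of $(I,\preceq)$ from $s(r)$ up to $r$ can pass only through elements of $R$, since a strict intermediate vertex $v\notin R$ would lie in $H(r)\setminus R$ and hence satisfy $v\preceq s(r)$, contradicting $v\succ s(r)$; and the maximality of $s(r)$ in $H[R]\setminus R$ together with relation \eqref{addedformula} would be obtained by an induction over the poset, ruling out elements of $R$ strictly below $s(r)$ (such an element $t$ would, via its own $s(t)$, produce a point of $H[R]\setminus R$ strictly below $s(r)$). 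Finally, the measure: since $i\in R$ forces $H[i]\setminus R=\widetilde{H}[s(i)]$ with $s(i)\in S$, the preimage of $\widetilde{H}[i']$ is $\{i'\}$ when $i'\in\widetilde{I}\setminus S$ and $\{s_h\}\sqcup R_{s_h}$ when $i'=s_h\in S$, giving $\widetilde{p}^0_{i'}=p^0_{i'}$ in the first case and $\widetilde{p}^0_{s_h}=p^0_{s_h}+\sum_{j\in R_{s_h}}p^0_j$ in the second.

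The routine parts are the linear-independence lemma and the coefficient bookkeeping. The main obstacle will be the order-theoretic core of the third paragraph, specifically the implication $(\star)\Rightarrow R$ fibered: one must extract the full fibered structure (the chains $\mathcal{C}(r)$, the maximality of each $s(r)$ in $H[R]\setminus R$, and the identity \eqref{addedformula}) from the bare statement that each $H(r)\setminus R$ has a maximum, and handle the degenerate configurations — for instance $H(r)\setminus R=\emptyset$, or $r$ minimal in $(I,\preceq)$ — where one has to check that $(\star)$ genuinely fails unless $R$ is fibered.
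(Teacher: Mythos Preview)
Your reduction of the problem to condition $(\star)$ via the linear independence of the family $\{T_J\}_{J\subseteq\widetilde{I}}$ is correct and is a cleaner formulation than the paper's. The paper argues the two directions separately: for the forward direction it invokes \eqref{addedformula} to show that all $r$ in the same fiber $R_m$ produce the same operator $T_{H[s_m]}$; for the converse it exhibits, when $R$ is not fibered, a summand $T_J$ in $\widetilde{\mathcal{P}}$ with either $J=\emptyset$ (when some $r\in R$ has $H(r)=\emptyset$) or with $J$ containing two incomparable maximal elements (when $H(r)\setminus R$ has two maximal elements), and observes that such a $T_J$ can never equal $T_{\widetilde{H}[i']}$. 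Your framework makes transparent that both directions are really a comparison of the index sets $\{H[i]\setminus R:i\in I\}$ and $\{\widetilde{H}[i']:i'\in\widetilde{I}\}$, and your derivation of the formula for $\widetilde{p}^0$ from this comparison is correct.

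The genuine gap is in your sketch of $(\star)\Rightarrow$ fibered. The inductive step you propose --- ``such an element $t\in R$ below $s(r)$ would, via its own $s(t)$, produce a point of $H[R]\setminus R$ strictly below $s(r)$'' --- yields no contradiction: an element of $H[R]\setminus R$ strictly \emph{below} $s(r)$ violates neither the maximality of $s(r)$ nor anything else. In fact the conclusion you aim for is false: take the chain $I=\{1\prec 2\prec 3\prec 4\}$ with $R=\{2,4\}$. Then $(\star)$ holds with $s(2)=1$, $s(4)=3$, yet $2\in R$ sits strictly below $s(4)=3$, so \eqref{addedformula} fails for $r=4$, and $s(2)=1$ is not maximal in $H[R]\setminus R=\{1,3\}$. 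Under the paper's literal definition of ``fibered'' this $R$ is \emph{not} fibered, although $\widetilde{\mathcal{P}}=(p_1^0+p_2^0)\,U_1\otimes I_3+(p_3^0+p_4^0)\,U_1\otimes U_3$ \emph{is} the generalized crested product on $\widetilde{I}=\{1,3\}$. The paper's own converse argument (its two-case dichotomy) does not detect this example either: what that argument actually characterizes is precisely $\neg(\star)$, not the negation of the literal definition. So your reduction to $(\star)$ is the right target; you should abandon the attempt to derive \eqref{addedformula} and maximality in $H[R]\setminus R$ from $(\star)$ and instead match $(\star)$ directly to the paper's operative dichotomy (each $H(r)\setminus R$ nonempty with a unique maximal element), which is how ``fibered'' is effectively used in the proof.
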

\begin{proof}
Firstly, suppose that $R$ is fibered over $S$. It follows from
Proposition \ref{propositionR} that $\widetilde{\mathcal{P}}$ is
obtained from $\mathcal{P}$ by forgetting indices in $R$. We have:
\begin{eqnarray*}
\widetilde{\mathcal{P}}\!\!&=&\!\!\sum_{i\in
I}p_i^0\left(\bigotimes_{j\in H[i]\setminus R}U_j\right)\otimes
\left(\bigotimes_{j\not\in
(H[i]\cup R)} I_j\right)\\
\!\!&=&\!\! \sum_{i\in R}p_i^0\!\!\left(\bigotimes_{j\in
H[i]\setminus R}U_j\!\right)\!\!\otimes\!\!
\left(\bigotimes_{j\not\in (H[i]\cup R)} I_j\!\right)
\!+\!\sum_{i\not\in R}p_i^0\!\!\left(\bigotimes_{j\in
H[i]\setminus R}U_j\!\right)\!\!\otimes\!\!
\left(\bigotimes_{j\not\in(H[i]\cup R)} I_j\!\right)\!\!.
\end{eqnarray*}
Since $R$ is fibered over $S$, all indices belonging to the same
fiber $R_m$ give rise to the same operator after deletion, so that
by \eqref{addedformula} we get
\begin{eqnarray*}
\widetilde{\mathcal{P}}\!\!&=&\!\! \sum_{m=1}^t \sum_{i\in R_m}
p_i^0 \!\!\left(\bigotimes_{j\in
H[s_m]}U_j\!\right)\!\!\otimes\!\! \left(\bigotimes_{j\not\in
(H[s_m]\cup R)} I_j\!\right)\!+\! \sum_{m=1}^t
p_{s_m}^0\!\!\left(\bigotimes_{j\in
H[s_m]}U_j\!\right)\!\!\otimes\!\! \left(\bigotimes_{j\not\in
(H[s_m]\cup R)}
I_j\!\right)\\
\!\!&+&\!\!\sum_{i\not\in (R\sqcup
S)}p_i^0\!\!\left(\bigotimes_{j\in H[i]\setminus
R}\!U_j\!\right)\!\!\otimes\!\!
\left(\bigotimes_{j\not\in(H[i]\cup R)}\! I_j\!\right)\\
\!\!&=&\!\! \sum_{m=1}^t \!\!\left(\sum_{i\in R_m} p_i^0
+p_{s_m}^0\!\right)\!\! \cdot \!\!\left(\bigotimes_{j\in
H[s_m]}\!\!U_j\!\right)\!\!\otimes\!\! \left(\bigotimes_{j\not\in
(H[s_m]\cup R)}\!\! I_j\!\right)\\
\!\!&+&\!\!\sum_{i\not\in (R\sqcup
S)}\!p_i^0\!\!\left(\bigotimes_{j\in H[i]\setminus
R}\!U_j\!\right)\!\!\otimes\!\!
\left(\bigotimes_{j\not\in(H[i]\cup R)} \!I_j\!\right)\\
\!\!&=& \!\! \sum_{i\in \widetilde{I}}\widetilde{p}_i\ \!\!
^0\left(\bigotimes_{j\in H[i]\setminus R}\!U_j\right)\otimes
\left(\bigotimes_{j\not\in (H[i]\cup R)} \!I_j\right).
\end{eqnarray*}
On the other hand if, given $R$, there is no $S$ such that $R$ is
fibered over $S$, then only two possibilities may occur: there can
exist $r\in R$ such that $H(r)=\emptyset$; or there is $r\in R$
with at least two maximal elements $s, s'\in H(r)\setminus R$. In
the first case, notice that $\{j\in H[r]\setminus R\}=\emptyset$
and so there is at least a summand in $\widetilde{\mathcal{P}}$
equal to $p^0_r \left(\otimes_{j\not\in R} I_j\right)$ which does
not appear in any generalized crested product. In the second case,
we get a summand containing a tensor product of the form
$$
(U_s\otimes U_{s'})\otimes \left(\otimes_{j\in (H(s)\cup H(s'))}
U_j\right)\otimes\left(\otimes_{j\in I\setminus(R\cup H[s]\cup
H[s'])} I_j\right).
$$
Since the elements $s,s'$ are not comparable, such a product does
not appear in any generalized crested product. This completes the
proof.
\end{proof}

\subsection{Direct product of lumpings}\label{subsectiondirect}

In this section, we introduce a more general construction of
lumpings for the generalized crested product, that we call direct
product of lumpings.\\
\indent Let $(I,\preceq)$ be a finite poset and let $P_i$ be a
lumpable Markov chain on $X_i$, for each $i\in I$. Hence, there is
a collection $\mathcal{L}^i=\{L^i_1,\ldots, L_{k_i}^i\}$ of
subsets of $X_i$ such that $\sqcup_{j=1}^{k_i}L_j^i=X_i$ and, for
each $s=1,\ldots, k_i$, the function $ x\mapsto p_i(x,L^i_s)$ is
constant on each part of $\mathcal{L}^i$. Clearly, the cartesian
product of partitions
$$
\mathcal{L}=\prod_{i\in I}\mathcal{L}^i
$$
provides a partition of the cartesian product $X=\prod_{i\in
I}X_i$.
\begin{thm}\label{thm6}
For all parts $L,L'\in \mathcal{L}$, the function $x\mapsto
\mathcal{P}(x,L')$ is constant on $L$.
\end{thm}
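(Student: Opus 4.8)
The plan is to compute $\mathcal{P}(x,L')$ explicitly using the transition probabilities of the generalized crested product, and to reduce the claim to the one-factor lumpability of each $P_i$. Fix parts $L = \prod_{i\in I}L^i_{a_i}$ and $L' = \prod_{i\in I}L^i_{b_i}$ of $\mathcal{L}$, and take $x=(x_1,\ldots,x_n)\in L$, so that $x_i\in L^i_{a_i}$ for each $i$. Starting from
$$
\mathcal{P}(x,L') \;=\; \sum_{y\in L'}\sum_{i\in I}p_i^0\,p_i(x_i,y_i)\prod_{j\in H(i)}u_j(x_j,y_j)\prod_{j\not\in H[i]}\delta_j(x_j,y_j),
$$
I would interchange the two sums, fix an index $i$, and factor the sum over $y\in L' = \prod_k L^k_{b_k}$ into a product of independent sums over the coordinates $y_k\in L^k_{b_k}$. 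The point is that the summand, for fixed $i$, is a product of three types of factors indexed respectively by $\{i\}$, by $H(i)$, and by $I\setminus H[i]$.

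The three contributions are then handled separately. For the coordinates $j\in H(i)$, the factor is $\sum_{y_j\in L^j_{b_j}}u_j(x_j,y_j) = |L^j_{b_j}|/|X_j|$, which manifestly does not depend on $x_j$. For the coordinates $j\notin H[i]$, the factor is $\sum_{y_j\in L^j_{b_j}}\delta_j(x_j,y_j)$, which equals $1$ if $x_j\in L^j_{b_j}$ (i.e.\ if $b_j=a_j$) and $0$ otherwise; in either case this depends only on the part $L^j_{a_j}$ containing $x_j$, not on $x_j$ itself. For the index $i$ itself, the factor is $\sum_{y_i\in L^i_{b_i}}p_i(x_i,y_i) = p_i(x_i,L^i_{b_i})$, and here we invoke the hypothesis that $P_i$ is lumpable with respect to $\mathcal{L}^i$: this quantity is constant as $x_i$ ranges over the part $L^i_{a_i}$. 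Multiplying the three contributions, weighting by $p_i^0$, and summing over $i\in I$ yields an expression for $\mathcal{P}(x,L')$ that depends only on the parts $L^i_{a_i}$, i.e.\ only on $L$, as required.

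I do not expect a genuine obstacle here: the argument is essentially the observation that the tensor-product structure of $\mathcal{P}$ makes the sum over $L'$ split coordinatewise, so that factor-by-factor lumpability lifts to the product. The only point requiring mild care is the bookkeeping for the case $i\in H(i)$ versus $i\notin H[i]$ — but $i\notin H(i)$ always and $i\in H[i]$ always, so the index $i$ never falls into the $H(i)$ block nor the complement block, and the factorization into the three disjoint families $\{i\}$, $H(i)$, $I\setminus H[i]$ is clean. One should also note that $\mathcal{L}$ is genuinely a partition of $X$ (which is immediate since each $\mathcal{L}^i$ partitions $X_i$), and that the resulting lumped chain $\widetilde{\mathcal{P}}$ has transition probabilities $\widetilde{p}(L,L') = \sum_{i\in I}p_i^0\,\widetilde{p}_i(L^i_{a_i},L^i_{b_i})\prod_{j\in H(i)}\frac{|L^j_{b_j}|}{|X_j|}\prod_{j\notin H[i]}\delta(a_j,b_j)$, which exhibits $\widetilde{\mathcal{P}}$ as a weighted combination reflecting the same poset structure.
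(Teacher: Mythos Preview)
Your proposal is correct and follows essentially the same approach as the paper: both interchange the sums, fix $i$, and analyze the contribution coordinate by coordinate, reducing to the lumpability of each $P_i$ and obtaining the same closed form $p_i^0\,\widetilde{p}_i(L^i_{a_i},L^i_{b_i})\prod_{j\in H(i)}|L^j_{b_j}|/|X_j|\prod_{j\notin H[i]}\delta(a_j,b_j)$. Your explicit factorization of $\sum_{y\in L'}$ into a product of coordinate sums is slightly more streamlined than the paper's case analysis on whether some $s_j\neq t_j$ for $j\notin H[i]$, but the argument is the same.
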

\begin{proof} By direct computation,
\begin{eqnarray}\label{contractingformulas}
\mathcal{P}(x,L') &=& \sum_{y\in L'}\sum_{i\in I}p_i^0 p_i(x_i,y_i)\prod_{j\in H(i)}u_j(x_j,y_j)\prod_{j\not\in H[i]}\delta_j(x_j,y_j)\nonumber\\
 &=& \sum_{i\in I}p_i^0\sum_{y\in L'} p_i(x_i,y_i)\prod_{j\in H(i)}\frac{1}{|X_j|}\prod_{j\not\in H[i]}\delta_j(x_j,y_j).
\end{eqnarray}
For each fixed index $i$, the summand in
\eqref{contractingformulas} becomes
\begin{eqnarray}\label{newstar}
p_i^0 \prod_{j\in H(i)}\frac{1}{|X_j|}\left(\sum_{y\in
L'}p_i(x_i,y_i) \prod_{j\not\in H[i]}\delta_j(x_j,y_j)\right).
\end{eqnarray}
Let $L=\prod_{i=1}^nL^i_{s_i}$ and $L'=\prod_{i=1}^n L^i_{t_i}$,
with $s_i,t_i\in \{1, \ldots, k_i\}$. If there exists an index
$j\not\in H[i]$ such that $s_j\neq t_j$, then \eqref{newstar} is
zero since $\delta_j(x_j,y_j)=0$ for every $y\in L'$. If $s_j=t_j$
for any $j\not\in H[i]$, then we have $\prod_{h\in
H[i]}|L^h_{t_h}|$ elements $y$ such that $y_j=x_j$ for every
$j\not\in H[i]$. Therefore we get
$$
p_i^0\prod_{j\in H(i)}\frac{|L^j_{t_j}|}{|X_j|}\sum_{y_i\in
L^i_{t_i}} p_i(x_i,y_i) = p_i^0\prod_{j\in
H(i)}\frac{|L^j_{t_j}|}{|X_j|}
\widetilde{p}_i(L_{s_i}^i,L^i_{t_i}),
$$
since $P_i$ is lumpable. This proves that $\mathcal{P}(x,L')$ does
not depend on $x$.
\end{proof}

\begin{remark}
This second construction is more general than that one in Section
\ref{forgetting}. In fact, fixed the deletion set $R$, we have
that the corresponding lumping partition is obtained by taking the
universal partition of $X_i$, for $i\in R$, and the identity
partition of $X_i$, for $i\not\in R$.
\end{remark}

\subsection{Generalized product of lumpings}

We define here a more general construction, inspired by the
definition of generalized wreath product of groups given in
\cite{bayleygeneralized} (see Definition \ref{labelbayley}), and
allowing to get new lumping partitions for the generalized crested
product of Markov chains. We are going to define a class of
lumping partitions on $\prod_{i\in I}X_i$ reflecting the
combinatorial structure of the poset $(I,\preceq)$.

Let $\mathbb{L}_i$ be the set of the partitions associated with
all the possible lumpings of the Markov chain $P_i$ on $X_i$, i.e.
$$
\mathbb{L}_i=\{\mathcal{L}^i \ : \text{ the Markov chain }P_i
\text{ on }X_i \text{ is lumpable with respect to
}\mathcal{L}^i\}.
$$
Let $\overline{I}= \{h\in I : A(h)=\emptyset\}$. For any $h\in
\overline{I}$ we fix a lumping partition $\overline{\mathcal{L}}\
\!^{h}\in \mathbb{L}_{h}$. The cartesian partition product
$\prod_{h\in \overline{I} }\overline{\mathcal{L}}\ \!^{h}$ is
clearly a partition of $\prod_{h\in \overline{I}}X_h$.

For any $i\in I$, such that $A(i)\subseteq\overline{I}$, we define
a map
$$
f_i: \prod_{j\in A(i)} \overline{\mathcal{L}}\ \!^j\rightarrow
\mathbb{L}_i,
$$
whose domain is the cartesian product of the lumping partitions
that we have chosen for the sets $X_j, j\in A(i)$, and yielding a
lumping partition of $X_i$. Given an element $L=\prod_{j\in A(i)}
L^j_{h_j} \in \prod_{j\in A(i)} \overline{\mathcal{L}}\ \!^j$ such
that
$$
f_i(L)=\mathcal{L}^i=\{L_1^i,\ldots, L_{r_i}^i\},
$$
then $\coprod_{L}L\times f_i(L)$ is a partition of the set
$\prod_{j\in A(i)}X_j\times X_i $. We denote such a new partition
by $\overline{\mathcal{L}}\ \! ^{A[i]}$. For every $i\in I$ for
which $\overline{\mathcal{L}}\ \! ^{A(i)}$ has been constructed,
we define a map
$$
f_i: \overline{\mathcal{L}}\ \!^{A(i)}\rightarrow \mathbb{L}_i,
$$
providing a new partition $\overline{\mathcal{L}}\ \!^{A[i]}$ of
$\prod_{j\in A[i]}X_{j}$, depending on $f_i$. Continuing this way,
after a finite number of steps we construct a partition of the
whole $X$.
\begin{defi}
The partition of $X=\prod_{i\in I}X_{i}$ induced by the $f_i$' s
is called the generalized product of lumpings.
\end{defi}

\begin{example}
Consider the first poset in Fig. \ref{figureNN}, and suppose that
$X_i=\{0,1\}$ for each $i\in I$. Observe that $A(1) = A(2) =
\emptyset$ and $A(3)=\{1\}$, $A(4)=\{1,2\}$. Choose
$\overline{\mathcal{L}}^1=\{\{0, 1\}\}$ and
$\overline{\mathcal{L}}^2=\{\{0\},\{1\}\}$ and define the maps
$f_3:\overline{\mathcal{L}}^1\to \mathbb{L}_3$ and
$f_4:\overline{\mathcal{L}}^1\times \overline{\mathcal{L}}^2\to
\mathbb{L}_4$ as
$$
f_3(\{0,1\}) =\{\{0\},\{1\}\}\in \mathbb{L}_3
$$
$$
f_4(\{0,1\}\times \{0\}) =\{0, 1\}\in \mathbb{L}_4 \qquad
f_4(\{0,1\}\times \{1\}) =\{\{0\},\{1\}\}\in \mathbb{L}_4.
$$
This construction produces the following partition of $X_1\times
X_2\times X_3\times X_4$:
$$
\{0000,1000,0001,1001\}\sqcup\{0010,1010,0011,1011\}\sqcup\{0100,1100\}\sqcup\{0110,1110\}\sqcup
$$
$$
\sqcup\{0101,1101\}\sqcup\{0111,1111\}.
$$
\indent Consider now the chain in Fig. \ref{figureNN} and still
assume $X_i=\{0,1\}$ for every $i\in I$. Observe that
$A(1)=\emptyset$, $A(2)=\{1\}$, and $A(3)=\{1,2\}$. Choose
$\overline{\mathcal{L}}^1=\{\{0\},\{1\}\}$, and define the map
$f_2:\overline{\mathcal{L}}^1\to \mathbb{L}_2$ as
$f_2(\{0\})=\{\{0\},\{1\}\}\in \mathbb{L}_2$ and
$f_2(\{1\})=\{\{0,1\}\}\in \mathbb{L}_2$. Then we get the
partition $\overline{\mathcal{L}}^{A[2]}=\{\{00\},\{01\},
\{10,11\}\}$ of $X_1\times X_2$. Now define $f_3:
\overline{\mathcal{L}}^{A(3)}\equiv\overline{\mathcal{L}}^{A[2]}\to
\mathbb{L}_3$ as, for instance,
$f_3(\{00\})=f_3(\{10,11\})=\{\{0\},\{1\}\}$, and
$f_3(\{01\})=\{\{0,1\}\}$. With this choice, we obtain the
partition $\{\{000\}, \{001\}, \{010,011\}, \{100,110\},
\{101,111\}\}$ of $X_1\times X_2\times X_3$.
\end{example}

\begin{figure}[h]
\begin{picture}(300,75)

\letvertex A=(230,70)\letvertex B=(230,40)
\letvertex C=(230,10)

\put(225,75){$1$}\put(218,35){$2$}\put(226,-5){$3$}

\drawvertex(A){$\bullet$}\drawvertex(B){$\bullet$}
\drawvertex(C){$\bullet$}

\drawundirectededge(A,B){} \drawundirectededge(B,C){}

\letvertex c=(10,20)\letvertex d=(10,70)

\letvertex e=(70,20)\letvertex f=(70,70)

\drawvertex(c){$\bullet$}\drawvertex(d){$\bullet$}
\drawvertex(e){$\bullet$}\drawvertex(f){$\bullet$}
\drawundirectededge(c,d){} \drawundirectededge(e,d){}
\drawundirectededge(f,e){}

\put(7,6){$3$}\put(7,75){$1$}\put(67,6){$4$}\put(67,75){$2$}
\end{picture}
\caption{}\label{figureNN}
\end{figure}

\begin{thm}\label{thm7}
Any partition of $X$ obtained as a generalized product of lumpings
is a lumping partition of the corresponding generalized crested
product of Markov chains.
\end{thm}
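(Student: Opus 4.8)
The plan is to show that the generalized product of lumpings is a lumping of $\mathcal{P}$ by reducing it, via the recursive structure of the construction, to the direct product of lumpings case already handled in Theorem~\ref{thm6}. The key observation is that for a fixed $x\in X$, one can organize the sum defining $\mathcal{P}(x,L')$ according to the index $i\in I$ that is ``moved'', and that the constraint $x_j=y_j$ for all $j\notin H[i]$ forces the relevant coordinates of $y$ (those in the lower set $H[i]$, where $y$ can differ from $x$) to lie in a single factor part of the generalized product partition --- but crucially, \emph{which} parts these are is itself determined by the coordinates of $x$ in $A(i)$ (equivalently, in $I\setminus H[i]$), through the maps $f_j$. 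So the proof must track this dependence carefully.

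First I would set up notation: write $L=\prod_{i\in I}L^i_{s_i(x)}$ for the part of $\overline{\mathcal{L}}$ containing a given $x$, emphasizing that the index $s_i(x)$ of the $i$-th factor part depends on $x$ only through the coordinates $x_{A(i)}$ (this is exactly the content of the recursive definition via $f_i:\overline{\mathcal{L}}^{A(i)}\to\mathbb{L}_i$, and it should be stated and justified as a preliminary lemma, by induction on the height of $i$ in the poset). Then, fixing $L,L'\in\overline{\mathcal{L}}$ and $x\in L$, I would expand $\mathcal{P}(x,L')$ exactly as in the proof of Theorem~\ref{forgetisgood} or Theorem~\ref{thm6}: the summand for index $i$ vanishes unless $x_j=y_j$ for all $j\notin H[i]$; when it does not vanish, the coordinates $x_j=y_j$ with $j\in A(i)$ pin down the partition $f_i$ assigns to $X_i$ (and similarly for the coordinates in $H[i]$), so on the nonzero summands the factor parts appearing are constant across $x\in L$. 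The remaining inner sum then has the form $\prod_{j\in H(i)}\frac{|L^j_{t_j}|}{|X_j|}\,\widetilde{p}_i(L^i_{s_i},L^i_{t_i})$, precisely as in Theorem~\ref{thm6}, using lumpability of each $P_i$ with respect to the partition $f_i$ assigns.

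The cleanest way to carry this out, and the way I would actually write it, is an induction on $|I|$. Pick a maximal element $m\in\overline{I}$ (so $A(m)=\emptyset$). Split $X=X_m\times X_{I\setminus\{m\}}$; the operator $\mathcal{P}$ decomposes into the term with $i=m$ (which acts as $P_m$ on $X_m$ tensored with uniform/identity operators on a lower set, hence respects $\overline{\mathcal{L}}^m$ by lumpability of $P_m$ and is constant in the $X_m$-coordinate on each part) plus the terms with $i\neq m$; in the latter, since $m$ is maximal, $m\notin H[i]$ whenever $i\neq m$, so these terms leave the $X_m$-coordinate fixed and act, for each fixed value $x_m\in L^m_{s_m}$, as a generalized crested product on the smaller poset $I\setminus\{m\}$ whose lumping data is the generalized product of lumpings induced by the maps $f_i$ restricted appropriately (note that once $x_m$ is fixed within its part, the maps $f_i$ with $m\in A(i)$ become honest maps on $\overline{\mathcal{L}}^{A(i)\setminus\{m\}}$, giving a bona fide generalized product of lumpings on $I\setminus\{m\}$). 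Applying the inductive hypothesis to that smaller structure, together with the $i=m$ term handled directly, yields that $x\mapsto\mathcal{P}(x,L')$ is constant on $L$.

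The main obstacle is purely bookkeeping: making precise that ``fixing $x_m$ within its part $L^m_{s_m}$ turns the family $(f_i)$ into a generalized product of lumpings on the reduced poset,'' since $f_i$ may genuinely take different values on different parts of $\overline{\mathcal{L}}^{A(i)}$ and one must check that restricting to the parts compatible with $x_m\in L^m_{s_m}$ still gives a well-defined partition of $X_{I\setminus\{m\}}$ with the required recursive form --- and that the lumpability constant $\widetilde{p}_m(L^m_{s_m},L^m_{t_m})$ together with the factors $|L^j_{t_j}|/|X_j|$ genuinely do not see the individual representative $x$. None of this is deep, but it requires care with the indices, and I would isolate the statement ``$s_i(x)$ depends only on $x_{A(i)}$'' as a lemma to keep the main argument readable.
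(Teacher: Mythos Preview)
Your direct computation (the second substantive paragraph) \emph{is} the paper's proof, essentially verbatim: write each part as a product $L=\prod_i L^i_{t_i}$, fix the index $i$, observe that $A(i)\subseteq I\setminus H[i]$ so the nonvanishing of $\prod_{j\notin H[i]}\delta_j(x_j,y_j)$ forces $x_j=y_j$ for all $j\in A(i)$, whence $f_i$ assigns the same lumping partition $\mathcal{L}^i$ to both $x$ and $y$, and then invoke lumpability of $P_i$. That is complete as it stands; no further lemma is needed.

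The induction on $|I|$ you propose as ``cleaner'' is a genuinely different packaging, but it does not simplify the argument --- it adds overhead. The difficulty is your $i=m$ term: to see that $\sum_{y\in L'}p_m(x_m,y_m)\prod_{j\notin H[m]}\delta_j(x_j,y_j)$ is constant on $L$, you must show that whether $x_j\in L'^j_{t_j}$ (for $j\notin H[m]$, $j\neq m$) depends only on $L$ and not on the representative $x$. Since for such $j$ the factor parts $L^j_{s_j}$ and $L'^j_{t_j}$ may a priori lie in \emph{different} partitions of $X_j$ (determined by different ancestral data), you are forced to argue that the delta constraints on $A(j)$ make those partitions coincide --- which is precisely the key observation of the direct proof. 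So the induction recycles the same idea inside the base/splitting step and then layers bookkeeping on top (restricting the $f_i$'s, checking the reduced data is again a generalized product of lumpings). The paper's one-pass argument is shorter; your preliminary remark that ``$s_i(x)$ depends only on $x_{A(i)}$'' is the only lemma worth isolating, and once you have it the direct computation finishes in three lines.
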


\begin{proof} Any part defined by the construction described above can be
represented as a product
$$
L=L^1_{t_1}\times L^2_{t_2}\times \cdots \times L^n_{t_n}
$$
where $L^i_{t_i}$ is a part of some $\mathcal{L}^i\in
\mathbb{L}_i$. Moreover, for every $i$ such that $A(i)\neq
\emptyset$, the subset $L^i_{t_i}$ belongs to the partition
$\mathcal{L}^i=f_i(\prod_{j\in A(i)}L^j_{t_j})$. Let
$x=(x_1,\ldots,x_n)\in L$ and let $L'$ be another part of the new
partition. Then
$$
\mathcal{P}(x,L')= \sum_{i\in I}p_i^0\sum_{y\in L'}
p_i(x_i,y_i)\prod_{j\in H(i)}\frac{1}{|X_j|}\prod_{j\not\in
H[i]}\delta_j(x_j,y_j).
$$
Fix an index $i$ and consider the summand
$$
p^0_i\prod_{j\in H(i)}\frac{1}{|X_j|}  \sum_{y\in L'}
p_i(x_i,y_i)\prod_{j\not\in H[i]}\delta_j(x_j,y_j).
$$
Notice that $A(i)\subseteq I\setminus H[i]$. Let
$x_{i_1},\ldots,x_{i_m}$ be the entries of $x$ corresponding to
indices in $A(i)$, so that $(x_{i_1},\ldots,x_{i_m})\in
L^{i_1}_{t_{i_1}}\times\cdots\times L^{i_m}_{t_{i_m}}$ and
$f_i(L^{i_1}_{t_{i_1}}\times\cdots\times
L^{i_m}_{t_{i_m}})=\mathcal{L}^i$. We have that, if each of the
$\delta_j(x_j,y_j)$'s is not $0$, then $x_j=y_j$ for every $j\in
A(i)$. Since the $L^j_{t_j}$'s constitute a partition of $X_j$,
this implies that they induce the same partition $\mathcal{L}^i$
under $f_i$, so that $y_i$ belongs to some part $L^i_{s_i}$ of the
same lumping partition $\mathcal{L}^i$ containing $L^i_{t_i}$.
This implies that $\sum_{y\in L'}p_i(x_i,y_i)$ does not depend on
$x_i$, since $\mathcal{L}^i$ is a lumping partition of $X_i$.
\end{proof}

\begin{remark}
Notice that the generalized product of lumpings contains the
direct product defined in Section \ref{subsectiondirect} as a
particular case. In fact, in order to get the partition
$\mathcal{L}=\prod_{i\in I}\mathcal{L}^i$, it is enough to fix
$\overline{\mathcal{L}}^j=\mathcal{L}^j$ for every $j$ such that
$A(j)=\emptyset$, and to define the function $f_i$ to be constant
and equal to $\mathcal{L}^i$ for any other index.
\end{remark}

\section{Insect Markov chain}\label{labelinsect}
In this section we recall some basic properties of the Insect
Markov chain, whose state space is the cartesian product of $n$
finite sets, identified with the boundary of a poset block
structure (see Definition \ref{defiposetblocks}). It was first
introduced in \cite{figatalamanca} in the special case of the
regular rooted tree, then studied in \cite[Chapter 7]{libro} and
\cite{cutoff}, and generalized to different settings in
\cite{crested, orthogonal, generalizedcrested}. We will recall the
construction introduced by the authors in \cite{orthogonal} in the
more general context of orthogonal block structures. Let
$(I,\preceq)$ be a finite poset, with $|I|=n$ and suppose
$X_1=\cdots= X_n=:X$, for every $i\in I$. Let $\mathcal{A}$ be the
set of all ancestral subsets of $(I,\preceq)$. $\mathcal{A}$ has a
natural poset structure: in $(\mathcal{A},\preceq)$, we put
$A_1\preceq A_2$ if and only if $A_1\supseteq A_2$. By the
notation $A_1\lhd A_2$, we mean that $A_1\prec A_2$ and there is
no $A\in \mathcal{A}$ such that $A_1\prec A \prec A_2$. The Markov
operator associated with the Insect Markov chain is defined as
$$
\mathcal{P}_I=\sum_{I\neq A\in \mathcal{A}}
p_A\left(\bigotimes_{j\in A}
I_j\right)\otimes\left(\bigotimes_{j\not\in A} U_j\right),
$$
with
\begin{eqnarray}\label{coefficienti}
p_A = \sum_{\begin{array}{c}\scriptstyle C\subseteq \mathcal{A} \ \text{chain} \\
\scriptstyle C = \{I,A_1,\ldots,A',A\}
\end{array}}\alpha_{I,A_1}\cdots \alpha_{A',A}\left(1-\sum_{A\lhd
L}\alpha_{A,L}\right), \quad \text{for every }A\neq \emptyset,
\end{eqnarray}
and
\begin{eqnarray}\label{coefficienti2}
p_{\emptyset} = \sum_{\begin{array}{c}\scriptstyle C\subseteq \mathcal{A} \ \text{chain} \\
\scriptstyle C = \{I,A_1,\ldots,A',\emptyset\}
\end{array}}\alpha_{I,A_1}\cdots \alpha_{A',\emptyset}.
\end{eqnarray}
Here, for all $A',A\in \mathcal{A}$ such that $A'\lhd A$, the
coefficients $\alpha_{A',A}$ are defined as
\begin{eqnarray}\label{alfa1}
\alpha_{A',A} = \frac{1}{|\{J : J \lhd A'\}|\!\cdot\!|X|+|\{L : A'
\lhd L\}|-|X|\!\cdot\!\sum_{J\lhd A'}\alpha_{J,A'}}.
\end{eqnarray}
In particular, \eqref{alfa1} yields $\alpha_{I,A} = \frac{1}{|\{L
: I \lhd L\}|}$, for each $I\lhd A$. For all $I\lhd A'\lhd A$, in
the case $\alpha_{I,A'} =1$, the coefficient $\alpha_{A',A}$ is
not defined by \eqref{alfa1} but as $\alpha_{A',A} =
\frac{1}{|X|+|\{L : A' \lhd L\}|}$.

The state space of the Insect Markov chain $\mathcal{P}_I$ is the
cartesian product $X^n$, regarded as the boundary of the poset
block structure associated with $(I,\preceq)$. We define
\begin{eqnarray}\label{definitiondistance}
d_I(x,y)= n-\max_{A\in \mathcal{A}}\{|A| : \  x\sim_A y\}, \qquad
\text{for all }x,y\in X^n.
\end{eqnarray}
\begin{lemma}\label{lemmametricspace}
$d_I$ is a distance on $X^n$.
\end{lemma}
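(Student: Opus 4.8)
The plan is to verify the three defining properties of a metric for $d_I$ as given in \eqref{definitiondistance}. Throughout, the crucial observation is that the quantity $\max_{A \in \mathcal{A}}\{|A| : x \sim_A y\}$ is controlled by the \emph{largest} ancestral subset $A$ on which $x$ and $y$ agree; since $\mathcal{A}$ is closed under unions (the union of ancestral sets is ancestral) and $x \sim_{A_1} y$, $x \sim_{A_2} y$ together imply $x \sim_{A_1 \cup A_2} y$, there is in fact a unique maximal such ancestral set, call it $A(x,y)$, and $d_I(x,y) = n - |A(x,y)|$.

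First I would establish non-negativity and the identity of indiscernibles. Since $\emptyset \in \mathcal{A}$ and $x \sim_\emptyset y$ trivially for all $x,y$, the max is over a nonempty set, and $|A| \le |I| = n$, so $d_I(x,y) \ge 0$. Moreover $d_I(x,y) = 0$ if and only if $I$ itself belongs to the set $\{A \in \mathcal{A} : x \sim_A y\}$, i.e. $x \sim_I y$, which by definition of $\sim_I$ means $x_I = y_I$, that is $x = y$. Symmetry is immediate: $x \sim_A y \Leftrightarrow y \sim_A x$ for every $A$, so the sets being maximized over coincide and $d_I(x,y) = d_I(y,x)$.

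The real content is the triangle inequality $d_I(x,z) \le d_I(x,y) + d_I(y,z)$. Let $A = A(x,y)$ and $B = A(y,z)$ be the maximal ancestral sets witnessing agreement for the two pairs, so $d_I(x,y) = n - |A|$ and $d_I(y,z) = n - |B|$. On the intersection $A \cap B$ (which is again ancestral, since ancestral subsets are closed under intersection) we have $x_{A \cap B} = y_{A \cap B} = z_{A \cap B}$, hence $x \sim_{A \cap B} z$, and therefore $d_I(x,z) \le n - |A \cap B|$. It remains to check $n - |A \cap B| \le (n - |A|) + (n - |B|)$, i.e. $|A| + |B| \le n + |A \cap B|$; this follows from inclusion--exclusion $|A \cup B| = |A| + |B| - |A \cap B|$ together with $|A \cup B| \le |I| = n$. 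This completes the proof.

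I do not expect any serious obstacle here: the argument is essentially the standard proof that a Hamming-type distance built from a lattice of ``agreement sets'' is a metric, and the only facts needed beyond elementary set theory are that $\mathcal{A}$ is closed under finite unions and intersections and contains $\emptyset$ and $I$ --- all of which are immediate from the definition of ancestral subset. The one point worth stating carefully is the passage to the \emph{unique maximal} agreement set $A(x,y)$, since it is what makes the max well-behaved and underlies the clean use of inclusion--exclusion in the triangle inequality step.
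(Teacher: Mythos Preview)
Your proof is correct and follows essentially the same route as the paper's: both establish the triangle inequality by observing that the intersection of the two maximal ancestral agreement sets is again ancestral, so $x \sim_{A\cap B} z$, and then invoke inclusion--exclusion together with $|A\cup B|\le n$. Your version is slightly tidier (you argue directly rather than splitting into the cases $|A|+|B|\le n$ and $|A|+|B|>n$), and your remark on closure under unions giving a unique maximal agreement set is a helpful clarification, but the core argument is the same.
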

\begin{proof}
It is clear that $d_I$ satisfies $d_I(x,y)\geq 0$ for all $x,y\in
X^n$, and $d_I(x,y)= 0$ if and only if $x=y$; moreover, one has
$d_I(x,y)=d_I(y,x)$, for each $x,y\in X^n$. As regard as the
triangular inequality, observe that the condition $d_I(x,z)\leq
d_I(x,y)+d_I(y,z)$ is equivalent to
\begin{eqnarray}\label{disuguaglianzatriangolare}
n+\max_{A\in \mathcal{A}}\{|A| : \  x\sim_A z\}\geq \max_{A\in
\mathcal{A}}\{|A| : \  x\sim_A y\}+\max_{A\in \mathcal{A}}\{|A| :
\ y\sim_A z\}.
\end{eqnarray}
If $\max_{A\in \mathcal{A}}\{|A| : \  x\sim_A y\}+\max_{A\in
\mathcal{A}}\{|A| : \  y\sim_A z\}\leq n$, there is nothing to
prove. If $\max_{A\in \mathcal{A}}\{|A| : \  x\sim_A
y\}+\max_{A\in \mathcal{A}}\{|A| : \  y\sim_A z\}= n+h$, with $h$
a positive integer, then there exist two ancestral sets $A_1$ and
$A_2$ such that $\max_{A\in\mathcal{A}}\{|A| : \  x\sim_A
y\}=|A_1|$ and $\max_{A\in \mathcal{A}}\{|A| : \  y\sim_A
z\}=|A_2|$ and $|A_1|+|A_2|=n+h$. On the other hand $|A_1|+|A_2|-
|A_1\cap A_2|\leq n$. This implies that $|A_1\cap A_2|\geq h$ and,
since $A_1\cap A_2\in  \mathcal{A}$, one has $\max_{A\in
\mathcal{A}}\{|A| : \  x\sim_A z\}\geq h$ and so
\eqref{disuguaglianzatriangolare} holds.
\end{proof}

The Insect Markov chain is invariant under the action of the
generalized wreath product $F_I$ of the groups $Sym(X)$, which
acts by automorphisms on $X^n$ (see Theorem \ref{thmgwp}). This
implies that the probability of reaching $y$ from $x$ only depends
on the distance $d_I(x,y)$.
\begin{example}
Consider the poset $(I,\preceq)$ in Fig. \ref{carrettorigirato}
and assume that $X=\{0,1\}$.
\begin{figure}[h]
\begin{picture}(300,75)
\letvertex A=(80,30)\letvertex B=(60,50)\letvertex C=(100,50)
\drawvertex(A){$\bullet$}\drawvertex(B){$\bullet$}\drawvertex(C){$\bullet$}
\drawundirectededge(A,B){}\drawundirectededge(A,C){}
\put(55,55){$1$}\put(97,55){$2$}\put(76,17){$3$}

\put(00,45){$(I,\preceq)$}\put(275,45){$(\mathcal{A},\preceq)$}

\letvertex AA=(220,30)\letvertex BB=(200,50)\letvertex CC=(240,50)\letvertex DD=(220,5)\letvertex EE=(220,70)
\drawvertex(AA){$\bullet$}\drawvertex(BB){$\bullet$}\drawvertex(CC){$\bullet$}\drawvertex(DD){$\bullet$}\drawvertex(EE){$\bullet$}
\drawundirectededge(AA,BB){}\drawundirectededge(AA,CC){}
\drawundirectededge(AA,DD){}\drawundirectededge(BB,EE){}\drawundirectededge(CC,EE){}
\put(216,75){$\emptyset$}\put(175,47){$\{1\}$}\put(243,47){$\{2\}$}
\put(182,24){$\{1,2\}$}\put(215,-8){$I$}
\end{picture}\caption{}\label{carrettorigirato}
\end{figure}
Then $\mathcal{A}=\{\emptyset, \{1\}, \{2\}, \{1,2\}, I\}$. The
associated poset $(\mathcal{A},\preceq)$ is represented in Figure
\ref{carrettorigirato}. According with \eqref{definitiondistance},
we have:
\begin{itemize}
\item $d_I(000,000)=0$;
\item $d_I(000,001) = 1$;
\item $d_I(000,010) = d_I(000,011) = d_I(000,100) = d_I(000, 101) = 2$;
\item $d_I(000,110) = d_I(000,111) = 3$.
\end{itemize}
By using formulas \eqref{alfa1}, we get:
$$
\alpha_{I,\{1,2\}}=1, \quad
\alpha_{\{1,2\},\{1\}}=\alpha_{\{1,2\},\{2\}}=\frac{1}{4},\quad
\alpha_{\{1\},\emptyset}=\alpha_{\{2\},\emptyset}=\frac{2}{5}
$$
and then, using \eqref{coefficienti} and \eqref{coefficienti2},
$$
p_{\{1,2\}}=\frac{1}{2}, \quad p_{\{1\}}=p_{\{2\}}=\frac{3}{20},
\quad p_{\emptyset}=\frac{1}{5}.
$$
The associated Markov operator is then
\begin{eqnarray*}
\mathcal{P}_I&=& \frac{1}{2}I\otimes I \otimes U +
\frac{3}{20}I\otimes U \otimes U+ \frac{3}{20}U\otimes I \otimes U
+ \frac{1}{5}U\otimes
U \otimes U\\
&=& \frac{1}{80}\cdot\begin{pmatrix}
  28 & 28 & 5 & 5 & 5 & 5 & 2 & 2 \\
  28 & 28 & 5 & 5 & 5 & 5 & 2 & 2 \\
  5 & 5 & 28 & 28 & 2 & 2 & 5 & 5 \\
  5 & 5 & 28 & 28 & 2 & 2 & 5 & 5 \\
  5 & 5 & 2 & 2 & 28 & 28 & 5 & 5 \\
  5 & 5 & 2 & 2 & 28 & 28 & 5 & 5 \\
  2 & 2 & 5 & 5 & 5 & 5 & 28 & 28 \\
  2 & 2 & 5 & 5 & 5 & 5 & 28 & 28
\end{pmatrix},
\end{eqnarray*}
according with the fact that the probability transition $p(x,y)$
is a function of $d_I(x,y)$.
\end{example}

\begin{example}\label{differentdeletions}\rm
Consider the Insect Markov chain on $\{0,1\}^3$ associated with
the first poset in Fig. \ref{threeposets}, whose Markov operator
is
$$
\mathcal{P}_I=\frac{1}{7} U\otimes U\otimes U+ \frac{4}{21}
I\otimes U\otimes U + \frac{2}{3} I\otimes I\otimes U.
$$
If we apply the deletion construction of Section \ref{forgetting}
with $R=\{1\}$, so that $S=\{2\}$, we get
$$
\mathcal{P}_{I'}=\frac{1}{7} U\otimes U+ \frac{4}{21}U\otimes U +
\frac{2}{3}  I\otimes U=\frac{1}{3}U\otimes U + \frac{2}{3}
I\otimes U,
$$
which is the Insect Markov chain on the reduced poset. On the
other hand, with the choice $R'=\{3\}$, we have
$$
\mathcal{P}_{I''}=\frac{1}{7} U\otimes U+ \frac{4}{21}I\otimes U +
\frac{2}{3}  I\otimes I,
$$
which is not a generalized crested product of Markov chains, since
$R'$ is not fibered on any set, according with Proposition
\ref{deletingreduced}.
\begin{figure}[h]
\begin{picture}(200,75)
\letvertex A=(20,70)\letvertex B=(20,40)
\letvertex C=(20,10)
\letvertex a=(100,70)\letvertex b=(100,40)
\letvertex c=(100,10)
\letvertex d=(180,70)
\letvertex e=(180,40)\letvertex f=(180,10)

\drawvertex(a){$\times$}\drawvertex(b){$\bullet$}
\drawvertex(c){$\bullet$}\drawvertex(d){$\bullet$}
\drawvertex(e){$\bullet$}\drawvertex(f){$\times$}\drawvertex(A){$\bullet$}\drawvertex(B){$\bullet$}
\drawvertex(C){$\bullet$}

\drawundirectededge(b,a){}
\drawundirectededge(c,b){}\drawundirectededge(B,A){}
\drawundirectededge(C,B){} \drawundirectededge(e,d){}
\drawundirectededge(f,e){}

\put(8,7){$3$}\put(8,37){$2$}\put(8,67){$1$}
\put(88,7){$3$}\put(88,37){$2$}\put(88,67){$1$}
\put(168,7){$3$}\put(168,37){$2$}\put(168,67){$1$}
\end{picture}
\caption{}\label{threeposets}
\end{figure}
\end{example}

\section{Group actions and lumpability}\label{sectiongroups}

\subsection{General properties}

Many lumpings are generated by the action of a group on a set.
More precisely, let $P$ be a reversible Markov chain with state
space $X$ and let $G$ be a group acting on $X$ such that
\begin{eqnarray}\label{goodG}
p(gx,gy) = p(x,y), \qquad \text{for all }g\in G, x,y\in X.
\end{eqnarray}
The action of $G$ induces a partition of $X$ given by the orbits.
Denote by $G_x$ the orbit of the element $x\in X$ under the action
of $G$. By defining
\begin{eqnarray}\label{lumpingbygroup}
p_G(G_x,G_y) = p(x,G_y) = \sum_{y'\in G_y}p(x,y'),
\end{eqnarray}
then one gets a lumping of $X$ providing a new Markov chain
defined on the space of the orbits. In fact, it is straightforward
to verify that $p_G(G_x,G_y)$ does not depend on the choice of
$x$, so that the lumped Markov chain $P_G$ is well defined. The
following results can be found in \cite{diaconis}.
\begin{prop}
Let $P$ be a reversible Markov chain on $X$ and $G$ a group
satisfying \eqref{goodG}. Let $P_G$ be the Markov chain defined in
\eqref{lumpingbygroup}. If $\widetilde{f}$ is an eigenfunction of
$P_G$ with eigenvalue $\widetilde{\lambda}$, then
$\widetilde{\lambda}$ is an eigenvalue of $P$ with $G$-invariant
eigenfunction $f$ such that $f(x) = \widetilde{f}(G_x)$, for each
$x\in X$. Conversely, every $G$-invariant eigenfunction appears
uniquely from this construction.
\end{prop}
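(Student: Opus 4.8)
The plan is to recognize this statement as a direct specialization of Proposition \ref{lumpableproperties} to the orbit partition. First I would set $\mathcal{L}=\{G_x : x\in X\}$, the partition of $X$ into $G$-orbits. As already observed in the text, condition \eqref{goodG} guarantees that $p(x,G_y)$ does not depend on the choice of $x$ within $G_x$: for $g\in G$ the map $y'\mapsto gy'$ permutes the orbit $G_y$, so
$$
p(gx,G_y)=\sum_{y'\in G_y}p(gx,y')=\sum_{y'\in G_y}p(gx,gy')=\sum_{y'\in G_y}p(x,y')=p(x,G_y).
$$
Hence $P$ is lumpable with respect to $\mathcal{L}$, and the lumped chain $\widetilde{P}$ of Proposition \ref{lumpableproperties} coincides with the chain $P_G$ defined in \eqref{lumpingbygroup}.

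Next I would identify the subspace $W_{\mathcal{L}}$ appearing in Proposition \ref{lumpableproperties} --- the functions constant on each part of $\mathcal{L}$ --- with the space of $G$-invariant functions on $X$: a function $f\in L(X)$ is constant on every orbit $G_x$ if and only if $f(gx)=f(x)$ for all $g\in G$ and $x\in X$, i.e. if and only if $f$ is $G$-invariant. With this identification, Proposition \ref{lumpableproperties} states precisely that $f\mapsto\widetilde{f}$, with $\widetilde{f}(G_x)=f(x)$, is an eigenvalue-preserving bijection between the $G$-invariant eigenfunctions of $P$ and the eigenfunctions of $\widetilde{P}=P_G$. Reading this bijection in both directions yields exactly the two assertions in the statement: if $\widetilde{f}$ is an eigenfunction of $P_G$ with eigenvalue $\widetilde{\lambda}$, then $f(x)=\widetilde{f}(G_x)$ is a $G$-invariant eigenfunction of $P$ with the same eigenvalue; and conversely every $G$-invariant eigenfunction of $P$ arises in this way from a unique eigenfunction of $P_G$, uniqueness being immediate since $f$ determines $\widetilde{f}$ via $\widetilde{f}(G_x)=f(x)$.

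There is essentially no obstacle here beyond bookkeeping: the only point needing a line of verification is the well-definedness of the lumped transition probabilities under \eqref{goodG}, carried out in the display above, together with the identification of $W_{\mathcal{L}}$ with the $G$-invariant functions. I would also remark that reversibility of $P$, although assumed, is not used for the correspondence itself --- it merely guarantees that $P$ is diagonalizable, so that the whole space of $G$-invariant functions is spanned by eigenfunctions of $P$. Finally, I would note, in the spirit of the discussion following Proposition \ref{lumpableproperties}, that $\sigma(P_G)\subseteq\sigma(P)$, so that passing to the orbit chain can only enlarge the spectral gap.
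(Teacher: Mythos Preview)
Your argument is correct. The paper does not actually prove this proposition; it merely cites it from \cite{diaconis}. Your approach---specializing Proposition~\ref{lumpableproperties} to the orbit partition $\mathcal{L}=\{G_x:x\in X\}$, verifying lumpability via the change of variables $y'\mapsto gy'$ on $G_y$, and identifying $W_{\mathcal{L}}$ with the $G$-invariant functions---is the natural internal derivation from the material already developed in Section~\ref{sectionpreliminaries}, and all steps are sound. Your side remark that reversibility is not needed for the correspondence itself (only for diagonalizability) is also accurate.
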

The following proposition provides a condition for an
eigenfunction of $P$ to project to a nontrivial eigenfunction of
$P_G$.
\begin{prop}\label{propositionorbitzero}
Let $f$ be an eigenfunction of $P$ with eigenvalue $\lambda$, and
put $\overline{f}(x) = \sum_{g\in G}f(g^{-1}x)$. If
$\overline{f}\neq 0$, then $\widetilde{f}$ defined as
$\widetilde{f}(G_x) = \overline{f}(x)$ is an eigenfunction of
$P_G$ with eigenvalue $\lambda$.
\end{prop}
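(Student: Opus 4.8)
The plan is to prove that the symmetrized function $\overline{f}$ is itself a $G$-invariant eigenfunction of $P$ with eigenvalue $\lambda$, and then to read off the conclusion from the preceding proposition, which already sets up the bijection between $G$-invariant eigenfunctions of $P$ and eigenfunctions of $P_G$. The first step is to check that $\widetilde{f}$ is well defined, i.e.\ that $\overline{f}$ is constant on the $G$-orbits: for $h\in G$, replacing the summation variable $g$ by $hg$ (a bijection of $G$) gives $\overline{f}(hx)=\sum_{g\in G}f((hg)^{-1}hx)=\sum_{g\in G}f(g^{-1}x)=\overline{f}(x)$, so $\overline{f}$ is $G$-invariant and $\widetilde{f}(G_x):=\overline{f}(x)$ does not depend on the chosen representative of the orbit.

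Next I would verify the eigenvalue equation $P\overline{f}=\lambda\overline{f}$. Expanding and exchanging the two finite sums, $(P\overline{f})(x)=\sum_{g\in G}\sum_{y\in X}p(x,y)f(g^{-1}y)$; for each fixed $g$ the map $z\mapsto gz$ is a permutation of $X$, so the substitution $y=gz$ turns the inner sum into $\sum_{z\in X}p(x,gz)f(z)$. By hypothesis \eqref{goodG} one has $p(x,gz)=p(g^{-1}x,z)$, whence $(P\overline{f})(x)=\sum_{g\in G}\sum_{z\in X}p(g^{-1}x,z)f(z)=\sum_{g\in G}(Pf)(g^{-1}x)=\lambda\sum_{g\in G}f(g^{-1}x)=\lambda\overline{f}(x)$.

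Finally, since $\overline{f}$ is a $G$-invariant eigenfunction of $P$ with eigenvalue $\lambda$ and, by hypothesis, $\overline{f}\neq 0$, the previous proposition applies: $\overline{f}$ arises from a unique eigenfunction of $P_G$ with eigenvalue $\lambda$ taking the value $\overline{f}(x)$ on $G_x$, and this eigenfunction is precisely $\widetilde{f}$. Alternatively one can bypass that proposition and compute directly, using that $\overline{f}$ is constant on each orbit: $(P_G\widetilde{f})(G_x)=\sum_{G_y}p_G(G_x,G_y)\widetilde{f}(G_y)=\sum_{G_y}\sum_{y'\in G_y}p(x,y')\overline{f}(y')=(P\overline{f})(x)=\lambda\widetilde{f}(G_x)$, the middle equality holding because the orbits partition $X$. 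The whole argument is essentially a symmetrization, and the only point that deserves attention is the legitimacy of the change of variables $y=gz$ under a sum over all of $X$, which is valid precisely because $g$ acts as a permutation of $X$; there is no substantial obstacle here.
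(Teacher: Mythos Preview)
Your proof is correct. Note, however, that the paper does not actually supply its own argument for this proposition: it is stated without proof, with the remark ``The following results can be found in \cite{diaconis}'' preceding the block of propositions to which it belongs. So there is no in-paper proof to compare against; your symmetrization argument (showing $\overline{f}$ is $G$-invariant, then that $P\overline{f}=\lambda\overline{f}$, then either invoking the previous proposition or computing $P_G\widetilde f$ directly) is the standard one and would serve perfectly well as a self-contained justification.
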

The next proposition relies the spectral analysis of a lumped
Markov chain induced by the action of a group, with the
representation theory of the corresponding group.
\begin{prop}\label{propspectralanalysis}
Let $P$ be a reversible Markov chain on $X$, with a transitive
automorphism group $G$ satisfying \eqref{goodG}. Let
$$
L(X) = \bigoplus_{i=0}^{k}V_i
$$
be the isotypic decomposition of $L(X)$ under the action of $G$,
with $V_i = d_i W_i$, where $W_i$ are irreducible representations
of $G$ pairwise non isomorphic. Suppose $X\cong G/H$. Then, the
Markov chain $P_H$ has $\sum_{i=0}^kd_i$ distinct eigenvalues,
with $d_i$ eigenvalues having multiplicity $\dim W_i$ in the
Markov chain $P$.
\end{prop}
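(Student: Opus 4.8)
The plan is to use the $G$-invariance of the transition probabilities to make $P$ a $G$-equivariant operator on $L(X)$, and then to read off both spectra from the isotypic decomposition by means of Schur's lemma and Frobenius reciprocity. First I would observe that \eqref{goodG} is equivalent to $P$ commuting with the permutation representation $\rho$ of $G$ on $L(X)$: for $f\in L(X)$ and $x\in X$,
\begin{equation*}
(P\rho(g)f)(x)=\sum_{y\in X}p(x,y)f(g^{-1}y)=\sum_{z\in X}p(g^{-1}x,z)f(z)=(\rho(g)Pf)(x).
\end{equation*}
Hence $P$ preserves every isotypic component $V_i$. Writing $V_i\cong W_i\otimes M_i$ with $M_i=\mathrm{Hom}_G(W_i,L(X))$ the multiplicity space (so $\dim M_i=d_i$), Schur's lemma forces $P|_{V_i}=\mathrm{Id}_{W_i}\otimes A_i$ for a unique $A_i\in\mathrm{End}(M_i)$, and $A_i$ is diagonalizable over $\mathbb{R}$ since $P$ is (being reversible). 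Thus the eigenvalues of $P$ on $V_i$ are exactly those of $A_i$, each with multiplicity $(\dim W_i)\cdot(\text{mult.\ in }A_i)$; when $A_i$ has $d_i$ distinct eigenvalues, $V_i$ contributes $d_i$ eigenvalues of multiplicity $\dim W_i$.

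Next I would pass to $P_H$. Let $\mathcal{L}$ be the partition of $X$ into $H$-orbits, so that $W_{\mathcal{L}}=L(X)^H$; $P$ is lumpable with respect to $\mathcal{L}$ (cf.\ the discussion around \eqref{lumpingbygroup}), and by Proposition \ref{lumpableproperties} the eigenvalues of $P_H$, counted with multiplicity, coincide with those of the restriction $P|_{L(X)^H}$, where $L(X)^H=\bigoplus_{i=0}^k V_i^H$. Since $V_i^H\cong W_i^H\otimes M_i$ and $P$ acts on it as $\mathrm{Id}_{W_i^H}\otimes A_i$, the spectrum of $P_H$ is $\bigcup_i\sigma(A_i)$, and $\lambda\in\sigma(A_i)$ occurs in $P_H$ with multiplicity $(\dim W_i^H)\cdot(\text{mult.\ of }\lambda\text{ in }A_i)$. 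Here I invoke $X\cong G/H$: by Frobenius reciprocity the multiplicity of $W_i$ in $L(X)=\mathrm{Ind}_H^G\mathbf{1}$ equals $\dim(\mathrm{Res}_H W_i)^H=\dim W_i^H$, so $\dim W_i^H=d_i$. Comparing with the previous paragraph, $\sigma(P_H)=\sigma(P)$, it consists of $\sum_{i=0}^k d_i$ values (distinct in the generic case), and the $d_i$ values arising from $A_i$ are precisely those of multiplicity $\dim W_i$ in $P$, which is the assertion.

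I expect the delicate points to be of two kinds. The first is the multiplicity bookkeeping: if a real number lies in $\sigma(A_i)$ for several indices $i$, or with multiplicity larger than $1$ in some $A_i$, one must add the contributions, and the clean count of $\sum_i d_i$ \emph{distinct} eigenvalues presupposes that each $A_i$ has $d_i$ simple eigenvalues, no two of which coincide for different $i$ --- otherwise the conclusion should be read with \lq\lq at most\rq\rq. This holds automatically when $(G,H)$ is a Gelfand pair (all $d_i=1$, the $A_i$ scalars), recovering the classical spherical-function picture. The second, essentially formal, point is the identification $W_{\mathcal{L}}=L(X)^H$ together with the compatibility of $P|_{W_{\mathcal{L}}}$ with the $H$-invariant part of the isotypic decomposition; both amount to unwinding Schur's lemma, and I would refer to \cite{diaconis} for this circle of ideas.
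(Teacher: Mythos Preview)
The paper does not supply its own proof of this proposition: it is stated, together with the two preceding propositions, under the blanket attribution ``The following results can be found in \cite{diaconis}.'' So there is nothing to compare your argument against in the paper itself.

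On its merits your argument is the standard one and is correct. The commutation of $P$ with the $G$-action, the reduction on each isotypic block $V_i\cong W_i\otimes M_i$ to an endomorphism $A_i$ of the multiplicity space via Schur's lemma, the identification of $P_H$ with $P|_{L(X)^H}$ through Proposition \ref{lumpableproperties}, and the use of Frobenius reciprocity to obtain $\dim W_i^H=d_i$ are all valid. Your final caveat is also well taken: the literal wording ``$\sum_{i=0}^k d_i$ \emph{distinct} eigenvalues'' requires that each $A_i$ have simple spectrum and that no eigenvalue be shared between different blocks, which is a genericity hypothesis rather than an automatic consequence of the setup; the statement should indeed be read as ``at most $\sum_i d_i$'', with equality in the generic case and in particular whenever $(G,H)$ is a Gelfand pair (all $d_i=1$), which is the situation actually used in the applications later in the paper.
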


\begin{example}[{\bf Spherical lumping on the rooted tree}]\label{exampleAlfredo}\rm
Consider the case of the Insect Markov chain $\mathcal{P}$
associated with the totally ordered set $(I,\preceq)$ in Fig.
\ref{figure14}. We assume $X:=X_1=\cdots = X_n$, with $|X|=q$, so
that the state space can be identified with the set of finite
words of length $n$ over the alphabet $\{0,1,\ldots, q-1\}$.
$\mathcal{P}$ is invariant with respect to the classical iterated
wreath product
$$
G = \underbrace{Sym(q) \wr \cdots \wr Sym(q)}_{n \text{ times}}.
$$
The associated poset block structure is the rooted $q$-ary tree
$T_{q,n}$ of depth $n$, and the action of $G$ is transitive on
each level of the tree, in particular on its boundary identified
with $X^n$. If we fix the element $x_0 = 0^n$ and consider the
subgroup $H = Stab_{G}(x_0)$, then $X^n$ can be regarded as the
homogeneous space $X^n \cong G/H$. It is known
\cite{appendiceharpe, libro, adm, cutoff} that $(G,H)$ is a
Gelfand pair, so the decomposition of the space $L(X^n)$ into
irreducible submodules under the action of $G$ is
multiplicity-free:
$$
L(X^n)=\bigoplus_{j=0}^{n}W_j,
$$
where $W_0$ is the trivial representation and, for every
$j=1,\ldots, n$,
$$
W_j = \underbrace{L(X) \otimes \cdots \otimes L(X)}_{(j-1) \text{
times}}\otimes L(X)_1\otimes \underbrace{L(X)_0\otimes \cdots
\otimes L(X)_0}_{(n-j) \text{ times}},
$$
where $L(X)_0$ denotes the subspace of constant functions in
$L(X)$ and $L(X)_1 = \{f:X\to \mathbb{C}\ |\ \sum_{x\in
X}f(x)=0\}$. Therefore, we have $\dim W_0=1$ and $\dim
W_j=q^{j-1}(q-1)$, for every $j=1,\ldots, n$. $W_0$ is the
eigenspace associated with the eigenvalue $\lambda_0=1$, whereas
the eigenvalue associated with $W_j$, for $j=1,\ldots, n$, is
$\lambda_j= 1-\frac{q-1}{q^{n-j+1}-1}$.\\
\indent It is clear that the orbits in $X^n$ under the action of
$Stab_G(x_0)$ are the spheres $S_r(x_0)$ centered at $x_0$ of
radius $r$, for $r=0,\ldots,n$, defined with respect to the
distance $d_I$ that coincides, in this case where the poset block
structure is the tree $T_{q,n}$, with the usual ultrametric
distance $d$ on the boundary of the tree. Therefore, the lumped
Markov chain $\mathcal{P}_H$ has $n+1$ states, and each of the
eigenvalues $\lambda_j$, for $j=0,\ldots,n$ is an eigenvalue of
$\mathcal{P}_H$ with multiplicity $1$. Up to normalization, the
eigenfunction of $\mathcal{P}_H$ associated with $\lambda_0=1$ is
the constant function; the eigenfunction associated with
$\lambda_j$, for $j=1,\ldots,n$, is the function $\widetilde{f_j}$
such that
\begin{eqnarray}\label{graffenumbered}
\widetilde{f_j}(S_r(x_0))=
\begin{cases}
1&\text{if}\ r<n-j+1\\
\frac{1}{1-q}&\text{if}\ r=n-j+1\\
0&\text{if}\ r>n-j+1.
\end{cases}
\end{eqnarray}
In the case $q=2$, $n=3$, and $x_0=000$, the partition of
$\{0,1\}^3$ induced by the action of $H=Stab_G(000)$ is
$$
\{0,1\}^3= S_0(x_0)\sqcup S_1(x_0)\sqcup S_2(x_0)\sqcup S_3(x_0),
$$
with
$$
S_0(x_0)=\{000\}, \ S_1(x_0)=\{001\}, \ S_2(x_0)=\{010,011\}, \
S_3(x_0)=\{100,101,110,111\}.
$$
The transition probability matrices are
$$
\mathcal{P}=\frac{1}{168}\begin{pmatrix}
  67 & 67 & 11 & 11 & 3 & 3 & 3 & 3 \\
  67 & 67 & 11 & 11 & 3 & 3 & 3 & 3 \\
  11 & 11 & 67 & 67 & 3 & 3 & 3 & 3 \\
  11 & 11 & 67 & 67 & 3 & 3 & 3 & 3 \\
  3 & 3 & 3 & 3 & 67 & 67 & 11 & 11 \\
 3 & 3 & 3 & 3 & 67 & 67 & 11 & 11 \\
 3 & 3 & 3 & 3 & 11 & 11 & 67 & 67 \\
 3 & 3 & 3 & 3 & 11 & 11 & 67 & 67
\end{pmatrix}
\quad \mathcal{P}_H =\frac{1}{168}\begin{pmatrix}
  67 & 67 & 22 & 12 \\
  67 & 67 & 22 & 12 \\
  11 & 11 & 134 & 12 \\
  3 & 3 & 6 & 156
\end{pmatrix}.
$$
The eigenspaces of $\mathcal{P}$ are:
\begin{enumerate}
\item $W_0$, with eigenvalue $1$, of dimension $1$, generated by the function
$$
f_0=(1,1,1,1,1,1,1,1);
$$
\item $W_1$, with eigenvalue $6/7$, of dimension $1$, generated by the function
$$
f_1=(1,1,1,1,-1,-1,-1,-1);
$$
\item $W_2$, with eigenvalue $2/3$, of dimension $2$, generated by the functions
$$
f_{2,1}=(1,1,-1,-1,0,0,0,0) \qquad f_{2,2}=(0,0,0,0,1,1,-1,-1);
$$
\item $W_3$, with eigenvalue $0$, of dimension $4$, generated by the functions
$$
f_{3,1}= (1,-1,0,0,0,0,0,0)\qquad f_{3,2}=(0,0,1,-1,0,0,0,0)
$$
$$
f_{3,3}=(0,0,0,0,1,-1,0,0)\qquad f_{3,4}=(0,0,0,0,0,0,1,-1).
$$
\end{enumerate}
All the eigenspaces of $\mathcal{P}_H$ have dimension $1$  and
they are:
\begin{enumerate}
\item $\widetilde{W}_0$, with eigenvalue $1$, generated by the function $\widetilde{f}_0=(1,1,1,1)$;
\item $\widetilde{W}_1$, with eigenvalue $6/7$, generated by the function $\widetilde{f}_1=(1,1,1,-1)$;
\item $\widetilde{W}_2$, with eigenvalue $2/3$, generated by the function $\widetilde{f}_2=(1,1,-1,0)$;
\item $\widetilde{W}_3$, with eigenvalue $0$, generated by the
function $\widetilde{f}_3= (1,-1,0,0)$,
\end{enumerate}
according with Proposition \ref{propositionorbitzero} and
\eqref{graffenumbered}.
\end{example}

\subsection{The case of the Insect Markov chain on the rooted
tree}\label{cutoffcitation}

We restrict now our attention to the Insect Markov chain on the
cartesian product $X^n=X\times \cdots \times X$, with $|X|=q$,
associated with the totally ordered set $(I,\preceq)$ in Fig.
\ref{figure14}. In this case, the poset block structure is a
regular rooted tree of degree $q$, so that $X^n$ can be identified
with the boundary of $T_{q,n}$.

Fix a vertex $x_0\in X^n$. Using \eqref{coefficienti},
\eqref{coefficienti2}, \eqref{alfa1}, (see also \cite{cutoff}), we
obtain that the transition probabilities associated with this
Markov chain are
\begin{eqnarray}\label{dist01}
\qquad p(x_0,x_0) = p(x_0,x) =
q^{-1}(1-\alpha_1)+\sum_{i=2}^nq^{-i}\alpha_1\cdots
\alpha_{i-1}(1-\alpha_i), \quad \mbox{if } d(x_0,x)=1
\end{eqnarray}
and, more generally,
\begin{eqnarray}\label{distj}
p(x_0,x) = \sum_{i=j}^nq^{-i}\alpha_1\cdots
\alpha_{i-1}(1-\alpha_i), \quad \mbox{if } d(x_0,x)=j>1,
\end{eqnarray}
where the coefficients $\alpha_j$'s satisfy the recursive relation
$\alpha_j=\frac{1}{q+1}+\alpha_{j-1}\alpha_j\frac{1}{q+1}$ and are
described by
\begin{eqnarray}\label{alphaexplicit}
\begin{cases}
\alpha_j=\frac{q^j-1}{q^{j+1}-1}&\text{for}\ \ 1\leq j\leq n-1\\
\alpha_0=1 \qquad \alpha_n=0.
\end{cases}
\end{eqnarray}
Observe that the Markov chain is in detailed balance with the
uniform distribution $\pi$ on $X^n$ given by $\pi(x) =
\frac{1}{q^n}$, for each $x \in X^n$.

Now let $\mathcal{L}=\{L_1, \ldots, L_k\}$ be a lumping of the
Insect Markov chain. Take an element $x_0 \in X^n$ and suppose
that $x_0\in L_r$, for some $r\in \{1,\ldots, k\}$. For every $i =
0,1,\ldots, n$, and $s=1,\ldots, k$, define:
\begin{eqnarray}\label{lambdadef}
\lambda_{i,s} = |\{x\in X^n \ : \ d(x_0,x)=i \text{ and } x\in
L_s\}|.
\end{eqnarray}
In other words, $\lambda_{i,s}$ is the cardinality of the
intersection of the sphere of radius $i$ centered at $x_0$, with
the part $L_s$ of $\mathcal{L}$. In particular, one has:
$$
\lambda_{0,s} = \begin{cases}
1&\text{if}\ s=r\\
0&\text{otherwise}.
\end{cases}
$$
\begin{thm}\label{teoremone}
Let $\mathcal{L}=\{L_1, \ldots, L_k\}$ be a lumping of the Insect
Markov chain on $X^n$. Let $x_0,y_0 \in L_r$ and let
$\lambda_{i,s},\mu_{i,s}$ be the associated coefficients,
respectively, defined as in \eqref{lambdadef}. Then
$$
\lambda_{i,s}=\mu_{i,s} \qquad \text{for each }i=0,\ldots, n
\text{ and } s=1,\ldots,k.
$$
\end{thm}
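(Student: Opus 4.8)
The plan is to deduce the statement from a single structural fact: lumpability of the Insect chain already forces the lumping partition to be ``invariant'' under every sphere operator, not merely under the operator $\mathcal{P}_I$ of the chain itself. Let $G=\underbrace{Sym(q)\wr\cdots\wr Sym(q)}_{n\ \text{times}}$ act on $X^n$ as in Example~\ref{exampleAlfredo}, and put $H=Stab_G(x_0)$ for the given element $x_0\in L_r$. Since the Insect Markov chain is $G$-invariant (Section~\ref{labelinsect}), its operator $\mathcal{P}_I$ lies in the commutant $\mathcal{E}_G:=\mathrm{End}_G(L(X^n))$. For $i=0,1,\dots,n$ let $A_i\in\mathrm{End}(L(X^n))$ be the operator with matrix $(A_i)_{x,y}=1$ if $d(x,y)=i$ and $0$ otherwise; as $G$ acts by automorphisms of $T_{q,n}$, hence by $d$-isometries, each $A_i\in\mathcal{E}_G$, and one has $\mathcal{P}_I=\sum_{i=0}^{n}p_iA_i$ with the $p_i$ given by \eqref{dist01}--\eqref{distj}. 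Because the $H$-orbits on $X^n$ are exactly the $n+1$ spheres $S_0(x_0),\dots,S_n(x_0)$ (Example~\ref{exampleAlfredo}) and $G$ is transitive on $X^n$, the $G$-orbits on $X^n\times X^n$ are precisely the $n+1$ distance classes, so $\{A_0,\dots,A_n\}$ is a basis of $\mathcal{E}_G$ and $\dim_{\mathbb{C}}\mathcal{E}_G=n+1$.

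The decisive step is the identity $\mathbb{C}[\mathcal{P}_I]=\mathcal{E}_G$. By Example~\ref{exampleAlfredo}, $\mathcal{P}_I$ has the $n+1$ pairwise distinct eigenvalues $\lambda_0=1>\lambda_1>\cdots>\lambda_n=0$, where $\lambda_j=1-\frac{q-1}{q^{\,n-j+1}-1}$ for $j\geq 1$; since $\mathcal{P}_I$ is in detailed balance with the uniform measure it is diagonalizable, so its minimal polynomial has degree $n+1$ and $\dim_{\mathbb{C}}\mathbb{C}[\mathcal{P}_I]=n+1$. As $\mathbb{C}[\mathcal{P}_I]\subseteq\mathcal{E}_G$ and both spaces have dimension $n+1$, they coincide; in particular every $A_i$ is a polynomial in $\mathcal{P}_I$.

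To conclude, recall from Proposition~\ref{lumpableproperties} that, $\mathcal{L}$ being a lumping of $\mathcal{P}_I$, the subspace $W_{\mathcal{L}}$ of functions constant on each part of $\mathcal{L}$ is $\mathcal{P}_I$-invariant; hence it is invariant under every polynomial in $\mathcal{P}_I$, and therefore under each $A_i$. Thus, for all $i$ and $s$, the function $A_i{\bf 1}_{L_s}$ belongs to $W_{\mathcal{L}}$, i.e.\ it is constant on each part of $\mathcal{L}$. For any $x\in X^n$ one has
$$
(A_i{\bf 1}_{L_s})(x)=\sum_{y\in L_s}(A_i)_{x,y}=\bigl|\{y\in L_s:\ d(x,y)=i\}\bigr|,
$$
which by \eqref{lambdadef} equals $\lambda_{i,s}$ when $x=x_0$ and $\mu_{i,s}$ when $x=y_0$. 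Since $x_0$ and $y_0$ lie in the same part $L_r$ and $A_i{\bf 1}_{L_s}$ is constant on $L_r$, we obtain $\lambda_{i,s}=\mu_{i,s}$ for all $i=0,\dots,n$ and $s=1,\dots,k$.

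The main obstacle is exactly the equality $\mathbb{C}[\mathcal{P}_I]=\mathcal{E}_G$, which is where the rooted-tree hypothesis is used: it requires both that the commutant has dimension precisely $n+1$ (equivalently, that the $H$-orbits on $X^n$ are only the $n+1$ spheres, a Gelfand-pair type fact) and that $\mathcal{P}_I$ attains the maximal number $n+1$ of distinct eigenvalues. Everything else is bookkeeping. An equivalent but more computational route avoids naming $\mathcal{E}_G$: observe that $W_{\mathcal{L}}$ is also $\mathcal{P}_I^{\,m}$-invariant for every $m\geq 0$, which gives the linear system $\sum_{i=0}^{n}p_i^{(m)}(\lambda_{i,s}-\mu_{i,s})=0$, where $p_i^{(m)}$ is the $m$-step transition probability at distance $i$ (well defined by $G$-invariance); one then checks that the vectors $(p_i^{(m)})_{i=0}^{n}$ for $m=0,\dots,n$ span $\mathbb{C}^{n+1}$, which is the same linear-algebra statement as above.
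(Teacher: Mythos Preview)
Your proof is correct and takes a genuinely different route from the paper's own argument.

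The paper proceeds computationally: it writes the single lumpability equation $p(x_0,L_s)=p(y_0,L_s)$ in the form $\sum_j(\lambda_{j,s}-\mu_{j,s})\sum_{i=j}^{n-1}\frac{1}{(q^{i+1}-1)(q^i-1)}=0$, using the explicit transition probabilities \eqref{dist01}--\eqref{alphaexplicit} and the constraint $\sum_i\lambda_{i,s}=\sum_i\mu_{i,s}$. It then argues by contradiction, letting $k$ be the first index where $\lambda_{k,s}\neq\mu_{k,s}$, and exploits the integrality and the sphere-size bounds $0\le\lambda_{j,s},\mu_{j,s}\le q^{j-1}(q-1)$ to show, via somewhat delicate partial-fraction estimates, that the left-hand side is strictly positive. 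The argument thus squeezes a uniqueness statement out of a single linear equation by using the combinatorial constraints on the coefficients.

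Your approach bypasses all of this by observing that, since $\mathcal{P}_I$ has $n+1$ pairwise distinct eigenvalues and lies in the $(n+1)$-dimensional commutant $\mathcal{E}_G$, one has $\mathbb{C}[\mathcal{P}_I]=\mathcal{E}_G$; hence $W_{\mathcal{L}}$ is invariant not only under $\mathcal{P}_I$ but under every sphere operator $A_i$, and the conclusion is immediate. This is shorter, more conceptual, and more robust: it shows that the same statement holds for \emph{any} $G$-invariant Markov operator on $X^n$ whose eigenvalues on the $n+1$ irreducible summands $W_j$ are pairwise distinct, with no recourse to the explicit $\alpha_j$'s or to integrality. The paper's computation, on the other hand, is self-contained in the sense that it does not invoke the Gelfand-pair decomposition or the eigenvalue list from Example~\ref{exampleAlfredo}; it works directly from the transition probabilities. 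Both inputs you cite (the $n+1$ spheres as $H$-orbits, and the distinct eigenvalues $\lambda_j$) are, however, stated elsewhere in the paper, so your use of them is entirely legitimate in this context.
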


\begin{proof}
Since $\mathcal{L}$ is a lumping and $x_0,y_0$ belong to the same
part $L_r$, we have $p(x_0,L_s)=p(y_0,L_s)$ for each $s$, and so
$$
\sum_{i=0}^n \left(\sum_{x_s\in L_s,\
d(x_0,x_s)=i}p(x_0,x_s)\right) = \sum_{i=0}^n \left(\sum_{y_s\in
L_s,\ d(y_0,y_s)=i}p(y_0,y_s)\right).
$$
Since the transition probability $p(x_0,x_s)$ (resp. $p(y_0,y_s)$)
only depends on the distance between $x_0$ and $x_s$ (resp. $y_0$
and $y_s$), we then get
\begin{eqnarray}\label{alfredoyoung}
\sum_{i=0}^n \lambda_{i,s} p(x_0,x_s) = \sum_{i=0}^n
\mu_{i,s}p(y_0,y_s).
\end{eqnarray}
Observe now that $ \lambda_{0,s} =\mu_{0,s}= \begin{cases}
1&\text{if}\ s=r\\
0&\text{otherwise}
\end{cases}$. Moreover, it must be
\begin{eqnarray}\label{equalsum}
\sum_{i=0}^n\lambda_{i,s} = \sum_{i=0}^n\mu_{i,s} = |L_s|.
\end{eqnarray}
Hence, by using \eqref{dist01}, \eqref{distj},
\eqref{alphaexplicit} and \eqref{equalsum}, we can rewrite
\eqref{alfredoyoung} as
\begin{eqnarray*}
\sum_{j=1}^{n-1}\left(
(\lambda_{j,s}-\mu_{j,s})\sum_{i=j}^{n-1}\frac{1}{(q^{i+1}-1)(q^i-1)}\right)=0.
\end{eqnarray*}
Suppose now, by the absurd, that $\lambda_{1,s}\neq \mu_{1,s}$. We
can assume, without loss of generality, that
$\lambda_{1,s}-\mu_{1,s}>0$. By dividing, we get
\begin{eqnarray}\label{afterdividing}
\sum_{i=1}^{n-1}\frac{1}{(q^{i+1}-1)(q^i-1)}+\sum_{j=2}^{n-1}\left(
\frac{\lambda_{j,s}-\mu_{j,s}}{\lambda_{1,s}-\mu_{1,s}}\sum_{i=j}^{n-1}\frac{1}{(q^{i+1}-1)(q^i-1)}\right)=0.
\end{eqnarray}
We want to show that the left-hand side of \eqref{afterdividing}
is actually strictly greater than $0$. To see that, observe that
the difference $\lambda_{1,s}-\mu_{1,s}$ must be at least equal to
$1$, whereas the difference $\lambda_{j,s}-\mu_{j,s}$ cannot be
smaller than $-q^{j-1}(q-1)$, for every $j=1,\ldots, n-1$.\\
\indent Therefore, the minimal value that the left-hand side of
\eqref{afterdividing} can take is obtained by replacing the
occurrence $\lambda_{1,s}-\mu_{1,s}$ by $1$ and the occurrences
$\lambda_{j,s}-\mu_{j,s}$ by $-q^{j-1}(q-1)$. After some
computations, one gets
\begin{eqnarray}\label{simplified}
\sum_{j=1}^{n-1}\frac{1-q^j+q}{(q^{j+1}-1)(q^j-1)}.
\end{eqnarray}
By using the decomposition $\frac{1}{(q^{i+1}-1)(q^i-1)} =
-\frac{\frac{q}{q-1}}{q^{i+1}-1}+\frac{\frac{1}{q-1}}{q^i-1}$, one
obtains that the ratio $\frac{1}{q-1}$ is multiplied by
$\frac{1}{q-1}$ in \eqref{simplified}, the ratio $\frac{1}{q^n-1}$
is multiplied by $\frac{-q+q^n-q^2}{q-1}$, and the ratio
$\frac{1}{q^j-1}$ is multiplied by $-(q+1)$, for every
$j=2,\ldots, n-1$. By collecting all terms, we get
$$
\frac{1}{q-1}\left(\frac{1}{q-1}+\frac{q^n-q^2-q}{q^n-1}-(q+1)\sum_{j=1}^{n-2}\frac{1}{c_j}\right),
$$
where, for each $j\geq 1$, we put $c_j =\sum_{k=0}^j q^k$. By
using the inequality $\frac{1}{c_j}<\frac{1}{q^{j-1}(q+1)}$ and
developing the sum, we finally get
$$
\frac{1}{q-1}\left(\frac{1}{q-1}+\frac{q^n-q^2-q}{q^n-1}-(q+1)\sum_{j=1}^{n-2}\frac{1}{c_j}\right)
> \frac{q^{n-3}(2q-1)-1}{q^{n-3}(q-1)^2(q^n-1)}>0
$$
for every $n \geq 2$ (the case $n=1$ is trivial). This is absurd.

The general case can be treated analogously. So let $k$ be the
smallest index such that $\lambda_{k,s}\neq \mu_{k,s}$ and put $S
= \sum_{i=0}^{k-1}\lambda_{i,s}= \sum_{i=0}^{k-1}\mu_{i,s}$. It is
easy to check that it must be
$$
\lambda_{k,s} = hS, \qquad \mu_{k,s} = \overline{h}S, \qquad
\text{ for some } h, \overline{h}\in \{0,1,\ldots, q-1\}.
$$
This implies
$$
-(q-1)S\leq \lambda_{k,s}-\mu_{k,s}\leq (q-1)S.
$$
By iterating this argument, one obtains that, for every $\ell
=1,\ldots, n-k-1$, one has
$$
-q^{\ell}(q-1)S\leq \lambda_{k+\ell,s}-\mu_{k+\ell,s}\leq
q^{\ell}(q-1)S.
$$
On the other hand, if we assume $\lambda_{k,s}-\mu_{k,s}>0$, we
have that $\lambda_{k,s}-\mu_{k,s}$ must be at least equal to $S$.
By arguing as in the previous case, we get the equation
\begin{eqnarray}\label{afterdividing2}
\sum_{i=k}^{n-1}\frac{1}{(q^{i+1}-1)(q^i-1)}+\sum_{j=k+1}^{n-1}\left(
\frac{\lambda_{j,s}-\mu_{j,s}}{\lambda_{k,s}-\mu_{k,s}}\sum_{i=j}^{n-1}\frac{1}{(q^{i+1}-1)(q^i-1)}\right)=0.
\end{eqnarray}
Therefore, the minimal value that the left-hand side of
\eqref{afterdividing2} can take is
\begin{eqnarray*}
\sum_{j=k}^{n-1}\frac{1-q^{j-k+1}+q}{(q^{j+1}-1)(q^j-1)}.
\end{eqnarray*}
This expression can be rewritten as
$$
\frac{1}{q-1}\left(\frac{1}{q^k-1}+\frac{q^{n-k+1}-q^2-q}{q^n-1}-(q+1)\sum_{j=k}^{n-2}\frac{1}{c_j}\right).
$$
By using again the estimate
$\frac{1}{c_j}<\frac{1}{q^{j-1}(q+1)}$, for each $j=k,\ldots,
n-2$, one can show that this is a strictly positive quantity for
every $k\leq n-1$, obtaining the final contradiction.
\end{proof}
\begin{remark}
We have already remarked that $\mathcal{L}$ is a lumping partition
of the Insect Markov chain on $T_{q,n}$ if and only if
\eqref{alfredoyoung} holds. The Theorem \ref{teoremone} ensures
that not only the sums in \eqref{alfredoyoung} coincide, but it
must be $\lambda_{i,s}=\mu_{i,s}$ for all $i,s$.
\end{remark}

In what follows we denote by $z_1,\ldots,z_{q^{n-1}}$ the vertices
of the $(n-1)$-st level of the rooted $q$-ary tree $T_{q,n}$,
identified with the cartesian product $X^{n-1}$. Hence, the
vertices of the $n$-th level whose prefix of length $n-1$ is $z_i$
have the form $z_ix$, with $x\in \{0,1,\ldots,q-1\}$. We write
$T_i$ for the set of vertices of type $\{z_ix\}$. Notice that
$T_i$ is isomorphic to $T_{q,1}$. If
$\mathcal{L}=\{L_1,\ldots,L_k\}$ is a lumping of the Insect Markov
chain on $X^n$, we denote by $T_i^j=T_i\cap L_j$ the set of
vertices in the class $L_j$ belonging to the subtree rooted at the
vertex $z_i\in X^{n-1}$ (see Fig. \ref{tree}). Observe that
$|T^j_i|=\lambda_{0,j}+\lambda_{1,j}$, where $\lambda_{0,j}$ and
$\lambda_{1,j}$ are referred to any element in $T^j_i$.

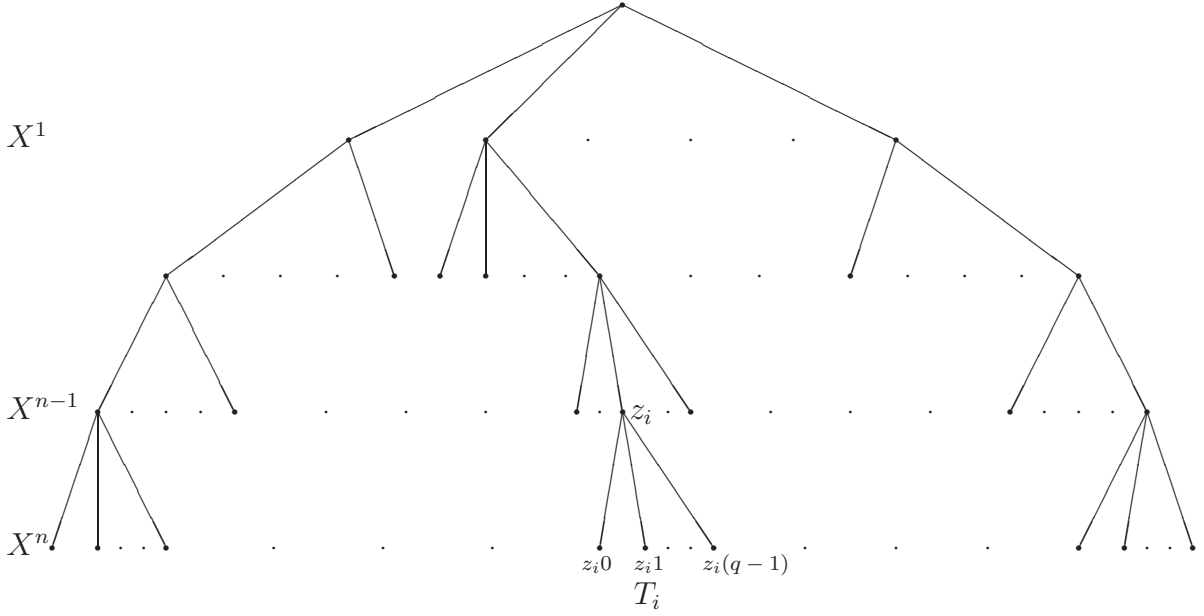
\begin{figure}[h!]
\begin{picture}(500,260)
\put(250,260){\circle*{2}}

\put(130,200){\circle*{2}}\put(190,200){\circle*{2}}
\put(370,200){\circle*{2}}

\put(50,140){\circle*{2}}\put(150,140){\circle*{2}}
\put(170,140){\circle*{2}}\put(190,140){\circle*{2}}\put(240,140){\circle*{2}}
\put(350,140){\circle*{2}}\put(450,140){\circle*{2}}

\put(20,80){\circle*{2}}
\put(80,80){\circle*{2}}\put(230,80){\circle*{2}}\put(250,80){\circle*{2}}
\put(280,80){\circle*{2}}\put(420,80){\circle*{2}}\put(480,80){\circle*{2}}

\put(0,20){\circle*{2}}\put(20,20){\circle*{2}}
\put(50,20){\circle*{2}}\put(240,20){\circle*{2}}\put(260,20){\circle*{2}}
\put(290,20){\circle*{2}}\put(450,20){\circle*{2}}\put(470,20){\circle*{2}}\put(500,20){\circle*{2}}

\put(250,260){\line(-2,-1){120}}\put(250,260){\line(-1,-1){60}}
\put(250,260){\line(2,-1){120}}

\put(130,200){\line(-4,-3){80}}\put(130,200){\line(1,-3){20}}
\put(190,200){\line(-1,-3){20}}
\put(190,200){\line(0,-1){60}}\put(190,200){\line(5,-6){50}}
\put(370,200){\line(-1,-3){20}}\put(370,200){\line(4,-3){80}}

\put(50,140){\line(-1,-2){30}}\put(50,140){\line(1,-2){30}}
\put(240,140){\line(-1,-6){10}}
\put(240,140){\line(1,-6){10}}\put(240,140){\line(2,-3){40}}
\put(450,140){\line(-1,-2){30}}\put(450,140){\line(1,-2){30}}

\put(20,80){\line(-1,-3){20}}\put(20,80){\line(0,-1){60}}
\put(20,80){\line(1,-2){30}}
\put(250,80){\line(-1,-6){10}}\put(250,80){\line(1,-6){10}}
\put(250,80){\line(2,-3){40}}\put(480,80){\line(-1,-2){30}}
\put(480,80){\line(-1,-6){10}}\put(480,80){\line(1,-3){20}}

\put(235,200){\circle*{1}} \put(280,200){\circle*{1}}
\put(325,200){\circle*{1}}

\put(75,140){\circle*{1}} \put(125,140){\circle*{1}}
\put(100,140){\circle*{1}}

\put(207,140){\circle*{1}} \put(225,140){\circle*{1}}

\put(375,140){\circle*{1}} \put(425,140){\circle*{1}}
\put(400,140){\circle*{1}}

\put(35,80){\circle*{1}} \put(65,80){\circle*{1}}
\put(50,80){\circle*{1}} \put(435,80){\circle*{1}}
\put(465,80){\circle*{1}} \put(450,80){\circle*{1}}
\put(240,80){\circle*{1}}\put(270,80){\circle*{1}}

\put(30,20){\circle*{1}} \put(40,20){\circle*{1}}
\put(270,20){\circle*{1}} \put(280,20){\circle*{1}}
\put(480,20){\circle*{1}} \put(490,20){\circle*{1}}

\put(280,140){\circle*{1}} \put(310,140){\circle*{1}}
\put(350,140){\circle*{1}}

\put(120,80){\circle*{1}}\put(155,80){\circle*{1}}
\put(190,80){\circle*{1}}
\put(315,80){\circle*{1}}\put(350,80){\circle*{1}}
\put(385,80){\circle*{1}}

\put(97,20){\circle*{1}}\put(145,20){\circle*{1}}
\put(193,20){\circle*{1}}
\put(330,20){\circle*{1}}\put(370,20){\circle*{1}}
\put(410,20){\circle*{1}}

\put(-20,197){$X^1$}\put(-20,77){$X^{n-1}$}\put(-20,17){$X^n$}

\put(253,77){$z_i$}\put(255,-5){$T_i$}

\tiny{\put(232,10){$z_i0$}\put(255,10){$z_i1$}
\put(285,10){$z_i(q-1)$}}
\end{picture}
\caption{The regular rooted tree $T_{q,n}$}\label{tree}
\end{figure}

\begin{lemma}\label{lemmaL}
Let $\mathcal{L}=\{L_1,\ldots,L_k\}$ be a lumping of the Insect
Markov Chain on $X^n$ and let $L_i'$ be the set of vertices in
$X^{n-1}$ obtained by deleting the last letter from the elements
in $L_i$. If there exist indices $i,j$ such that $L_i'\cap
L_j'\neq \emptyset$, then $L_i'=L_j'$.
\end{lemma}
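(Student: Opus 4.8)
The plan is to translate membership in $L_i'$ into an inequality on the coefficients $\lambda_{m,s}$ of \eqref{lambdadef} and then invoke Theorem~\ref{teoremone}. For $z\in X^{n-1}$, write $T_z=\{zu : u\in X\}$ for the set of leaves of $T_{q,n}$ whose prefix of length $n-1$ equals $z$ (so $T_z$ is one of the sets $T_i$ considered above), and observe that $z\in L_s'$ if and only if $T_z\cap L_s\neq\emptyset$. The elementary fact underlying the proof is that, for every leaf $v=zu\in T_z$, the vertices of $X^n$ at distance at most $1$ from $v$ are exactly $v$ and its siblings, so $T_z=S_0(v)\sqcup S_1(v)$ and hence
$$
|T_z\cap L_s|=\lambda_{0,s}+\lambda_{1,s},
$$
where $\lambda_{0,s}$ and $\lambda_{1,s}$ are the coefficients \eqref{lambdadef} computed relative to $v$; this extends the remark made just before the statement (which treated $v\in T_z\cap L_s$) to an arbitrary leaf $v\in T_z$.

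Now suppose $z\in L_i'\cap L_j'$ and let $w\in L_i'$ be arbitrary. Choose letters $x,x'\in X$ with $zx\in L_i$ and $wx'\in L_i$. Since $zx$ and $wx'$ lie in the same part $L_i$, Theorem~\ref{teoremone} gives $\lambda_{m,s}=\mu_{m,s}$ for all $m,s$, where $\lambda_{m,s}$ (resp.\ $\mu_{m,s}$) is the coefficient \eqref{lambdadef} relative to $zx$ (resp.\ to $wx'$). Taking $m\in\{0,1\}$ and $s=j$ and summing, the displayed identity applied at $v=zx$ and at $v=wx'$ yields
$$
|T_w\cap L_j|=\mu_{0,j}+\mu_{1,j}=\lambda_{0,j}+\lambda_{1,j}=|T_z\cap L_j|\geq 1,
$$
the last inequality because $z\in L_j'$. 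Hence $T_w\cap L_j\neq\emptyset$, i.e.\ $w\in L_j'$, so $L_i'\subseteq L_j'$; exchanging the roles of $i$ and $j$ gives the reverse inclusion, and therefore $L_i'=L_j'$.

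This is essentially a bookkeeping consequence of Theorem~\ref{teoremone}; the only point requiring a moment's care is the identity $|T_z\cap L_s|=\lambda_{0,s}+\lambda_{1,s}$, which holds precisely because deleting the last letter merges each leaf with exactly its distance-$1$ neighbours (its siblings), so that only the radius-$0$ and radius-$1$ intersection counts enter. No genuine obstacle is expected beyond keeping track of base points.
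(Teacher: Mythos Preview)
Your proof is correct and follows the same route as the paper's: both pick $zx, wx'\in L_i$, apply Theorem~\ref{teoremone} to equate their coefficients, and use the identity $|T_z\cap L_j|=\lambda_{0,j}+\lambda_{1,j}$ (which the paper records just before the lemma) to transfer nonemptiness of $T_z\cap L_j$ to $T_w\cap L_j$. The only difference is cosmetic: you spell out the identity and the inequality explicitly, while the paper phrases the same step as ``a bijection between $T_h^j$ and $T_s^j$''.
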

\begin{proof} If $L_i'\cap L_j'\neq \emptyset$, then there exist two elements $z_hx\in L_i$ and $z_hy\in L_j$.
Any other element in $L_i'$ comes from an element of type
$z_sx'\in L_i$. It follows from Theorem \ref{teoremone} that there
is a bijection between $T_h^j$ and $T^j_s$, so there exists $y'$
such that $z_sy'\in L_j$. Hence, $L_i'=L_j'$.
\end{proof}

The previous lemma implies that the sets $L_i'$'s provide a
partition of $X^{n-1}$. It is enough to choose one representative
for those indices such that $L_i'=L_j'$. Clearly, since $\sqcup_i
L_i=X^n$, we have $\cup_i L_i'=X^{n-1}$. We still denote the
induced partition on $X^{n-1}$ by
$\mathcal{L}'=\{L_1',\ldots,L_h'\}$, where $h\leq k$.

\begin{lemma}\label{lemmalump}
Under the hypotheses of Lemma \ref{lemmaL},
$\mathcal{L}'=\{L_1',\ldots,L_h'\}$ is a lumping for the Insect
Markov chain on $X^{n-1}$.
\end{lemma}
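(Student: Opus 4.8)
The plan is to prove a little more than lumpability. Write $\pi\colon X^n\to X^{n-1}$ for the map deleting the last letter, so $L_i'=\pi(L_i)$, and $T_z=\pi^{-1}(z)$ for the fibre over a level-$(n-1)$ vertex $z$. For $u\in X^{n-1}$ and a part $L_b'$ of $\mathcal{L}'$ set $\nu_j(u,L_b')=|\{v\in L_b' : d(u,v)=j\}|$, where $d$ is the tree distance; this is the $X^{n-1}$-analogue of the coefficients $\lambda_{i,s}$ of Theorem~\ref{teoremone}. I aim to show that $\nu_j(u,L_b')=\nu_j(u',L_b')$ for all $j$ and all $b$ whenever $u,u'$ lie in the same part of $\mathcal{L}'$. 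Granting this, since the Insect chain on $X^{n-1}$ is invariant under its automorphism group its transition probability $p(u,v)$ is a function of $d(u,v)$ alone, say $p(u,v)=g(d(u,v))$, so $p(u,L_b')=\sum_{v\in L_b'}p(u,v)=\sum_{j=0}^{n-1}\nu_j(u,L_b')\,g(j)=p(u',L_b')$, which is exactly the assertion that $\mathcal{L}'$ is a lumping.

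For the profile identity I would first use the recursive structure of $T_{q,n}$: if $x_0\in X^n$ has $(n-1)$-prefix $u$, then any $x$ whose $(n-1)$-prefix $z$ differs from $u$ satisfies $d(x_0,x)=1+d(u,z)$ (their longest common prefix is that of $u$ and $z$), so for every $i\ge 2$ the sphere $S_i(x_0)$ in $X^n$ is the disjoint union of the fibres $T_z$ with $d(u,z)=i-1$. Hence, for any part $L_t$ of $\mathcal{L}$, $\lambda_{i,t}(x_0)=\sum_{z:\,d(u,z)=i-1}|T_z\cap L_t|$. Now fix a part $L_b'$ of $\mathcal{L}'$ and sum over all $t$ with $L_t'=L_b'$: by Lemma~\ref{lemmaL} the raw projections $\pi(L_t)$ are pairwise equal or disjoint, so each child of a fixed $z$ lies in a part $L_t$ with $\pi(L_t)$ equal to the part of $\mathcal{L}'$ containing $z$; therefore $\sum_{t:\,L_t'=L_b'}|T_z\cap L_t|$ is $q$ or $0$ according as $z\in L_b'$ or not, whence $\sum_{t:\,L_t'=L_b'}\lambda_{i,t}(x_0)=q\,\nu_{i-1}(u,L_b')$.

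Next I lift $u,u'$ to $X^n$: choosing an index $s$ with $L_s'=L_a'$ (the common part of $u,u'$ in $\mathcal{L}'$), each of $u,u'$ has a child in $L_s$ because $u,u'\in L_a'=\pi(L_s)$, so pick $x_0,y_0\in L_s$ with $(n-1)$-prefixes $u,u'$. By Theorem~\ref{teoremone}, applied to the lumping $\mathcal{L}$ of the Insect chain on $X^n$, we have $\lambda_{i,t}(x_0)=\lambda_{i,t}(y_0)$ for all $i,t$; summing over $t$ with $L_t'=L_b'$ and dividing by $q$ gives $\nu_{i-1}(u,L_b')=\nu_{i-1}(u',L_b')$ for $i=2,\dots,n$, i.e.\ $\nu_j(u,L_b')=\nu_j(u',L_b')$ for $j=1,\dots,n-1$. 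For $j=0$ the quantity $\nu_0(u,L_b')$ is $1$ if $u\in L_b'$ and $0$ otherwise, which is the same for $u$ and $u'$ since they belong to the same part of the partition $\mathcal{L}'$; this completes the profile identity, and hence the proof.

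I expect the most delicate point to be the bookkeeping identity $\sum_{t:\,L_t'=L_b'}|T_z\cap L_t|=q$ or $0$ for $z$ in or out of $L_b'$: this is precisely where Lemma~\ref{lemmaL} is indispensable, forcing the behaviour of $\mathcal{L}$ on the last level to be organised compatibly with the partition $\mathcal{L}'$. I would also stress that the argument really needs the full conclusion of Theorem~\ref{teoremone} (equality of \emph{all} the $\lambda_{i,s}$), not merely that $\mathcal{L}$ is a lumping: the distance profile of the Insect chain on $X^n$ differs from that on $X^{n-1}$ only by an additive constant together with a shift of the radius by one, so using lumpability of $\mathcal{L}$ alone would yield only a shifted linear relation among the $\nu_j$'s, which does not suffice.
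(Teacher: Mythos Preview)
Your argument is correct and follows the same overall strategy as the paper: reduce lumpability of $\mathcal{L}'$ to the equality of the distance profiles $\nu_j(u,L_b')$ for $u,u'$ in a common part, relate these profiles to the $\lambda_{i,t}$ on $X^n$, and invoke Theorem~\ref{teoremone}.

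The only difference is in the bridging identity. The paper works with a single representative $L_t$ projecting to $L_t'$ and uses the multiplicative relation $\lambda_{\ell+1,t}(z_ix)=|T^t|\,\lambda_{\ell,t}'(z_i)$, where $|T^t|$ is the common value of $|T_s\cap L_t|$ over $z_s\in L_t'$ (a constancy that itself follows from Theorem~\ref{teoremone}); one then argues by contradiction. You instead sum over all $t$ with $L_t'=L_b'$ to obtain $\sum_{t}\lambda_{i,t}(x_0)=q\,\nu_{i-1}(u,L_b')$, which is a clean additive identity that uses only Lemma~\ref{lemmaL} and sidesteps the separate constancy argument. Both routes are short; yours is slightly more self-contained and the bookkeeping is more transparent, while the paper's version makes the role of the constant fibre sizes $|T_s^t|$ explicit.
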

\begin{proof} It follows from Theorem \ref{teoremone} that $\mathcal{L}'$ is a
lumping partition for the Insect Markov chain on $X^{n-1}$ if and
only if, given $z_i, z_j \in L_r'$, the corresponding indices
$\lambda_{k,s}'(z_i)$ and $\lambda_{k,s}'(z_j)$ coincide for every
$k=0,\ldots, n-1$ and $s=1,\ldots, h$. Suppose that there exist
indices $\ell, t$ such that $\lambda_{\ell,t}'(z_i)\neq
\lambda_{\ell,t}'(z_j)$. This implies that there exist $z_ix,
z_jy\in L_r$ such that
$$
\lambda_{\ell+1,t}(z_ix)=|T_i^t|\lambda_{\ell,t}'(z_i)\neq
|T_i^t|\lambda_{\ell,t}'(z_j)=\lambda_{\ell+1,t}(z_jy).
$$
This is a contradiction, since $\mathcal{L}$ is a lumping for the
Insect Markov chain on $X^n$.
\end{proof}

The next result shows that, for the Insect Markov chain associated
with the poset of Fig. \ref{figure14}, any lumping comes from the
action of a suitable automorphism group of the rooted tree
$T_{q,n}$. Denote by $Aut(T_{q,n})=\underbrace{Sym(q) \wr \cdots
\wr Sym(q)}_{n \text{ times}}$ the group of all automorphisms of
the rooted $q$-ary tree of depth $n$.

\begin{thm}\label{teoinsetto}
Let $\mathcal{P}_I$ be the Insect Markov chain on the boundary
$X^n$ of $T_{q,n}$. Let $\mathcal{L}=\{L_1,\ldots, L_k\}$ be a
lumping partition for $\mathcal{P}_I$. Then there exists $K\leq
Aut(T_{q,n})$ such that the orbit partition of $X^n$ under the
action of $K$ is $\sqcup_{i=1}^k L_i$.
\end{thm}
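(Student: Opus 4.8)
The plan is to argue by induction on $n$. For $n=1$ the Insect Markov chain on $X^1=X$ is just the uniform chain $U$ (only the summand $A=\emptyset$ survives in the defining formula), so every partition $\{L_1,\dots,L_k\}$ of $X$ is a lumping, and the group $K=\prod_{s=1}^{k}Sym(L_s)\leq Sym(q)=Aut(T_{q,1})$ has orbit partition exactly $\{L_1,\dots,L_k\}$. So assume the statement holds for $T_{q,n-1}$, and let $\mathcal{L}=\{L_1,\dots,L_k\}$ be a lumping of the Insect chain on $X^n$.

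First I would pass to the quotient tree. By Lemmas \ref{lemmaL} and \ref{lemmalump}, deleting the last letter produces a lumping $\mathcal{L}'=\{L_1',\dots,L_h'\}$ of the Insect chain on $X^{n-1}$, and by the inductive hypothesis there is $K'\leq Aut(T_{q,n-1})$ whose orbits on $X^{n-1}$ are exactly $L_1',\dots,L_h'$. For a level-$(n-1)$ vertex $z$ let $T_z$ be the bush of $q$ leaves above $z$, and let $\ell(z)$ be the index with $z\in L'_{\ell(z)}$; writing $J_\ell=\{i : L_i'=L'_\ell\}$, one has $T_z=\bigsqcup_{i\in J_{\ell(z)}}(T_z\cap L_i)$ with every $T_z\cap L_i$ nonempty (this is the discussion preceding Lemma \ref{lemmaL}). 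The essential structural input is Theorem \ref{teoremone}: any two leaves in the same part $L_i$ share the coefficient $\lambda_{1,i}$, so the cardinality $c_i:=|T_z\cap L_i|$ is independent of the particular bush $T_z$ meeting $L_i$, and $\sum_{i\in J_\ell}c_i=q$ for every class $\ell$.

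Now I would assemble $K$ from a ``vertical'' and a ``horizontal'' piece. For the vertical piece, let $B_0=\prod_{z\in X^{n-1}}\ \prod_{i\in J_{\ell(z)}}Sym(T_z\cap L_i)\leq Aut(T_{q,n})$ be the group of automorphisms fixing $X^{n-1}$ pointwise and permuting arbitrarily inside each block $T_z\cap L_i$; its orbits are precisely those blocks. For the horizontal piece I must lift $K'$ to $Aut(T_{q,n})$ in a way that still preserves $\mathcal{L}$. For each class $\ell$ fix a partition $\{B_i^\ell : i\in J_\ell\}$ of $\{1,\dots,q\}$ with $|B_i^\ell|=c_i$, and for each $z\in L'_\ell$ a bijection $\phi_z\colon X\to\{1,\dots,q\}$ sending the fibre coordinates of $T_z\cap L_i$ onto $B_i^\ell$ for every $i\in J_\ell$. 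Given $\gamma\in K'$ (which preserves the classes $L'_\ell$, since its orbits are the $L'_\ell$), define $\tilde\gamma\in Aut(T_{q,n})$ by $\tilde\gamma(zx)=(\gamma z)\,\psi_z(x)$ with $\psi_z=\phi_{\gamma z}^{-1}\circ\phi_z$. A routine check shows that $\tilde\gamma$ is a genuine tree automorphism, that $\gamma\mapsto\tilde\gamma$ is an injective homomorphism (the $\phi$'s cancel in pairs), and --- this is the crux --- that $\tilde\gamma$ maps $T_z\cap L_i$ onto $T_{\gamma z}\cap L_i$, hence fixes each part $L_i$ setwise. Put $K=\langle B_0,\ \tilde K'\rangle$ with $\tilde K'=\{\tilde\gamma:\gamma\in K'\}$ (in fact $\tilde K'$ normalises $B_0$, so $K=B_0\rtimes\tilde K'$).

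It then remains to check that the orbit partition of $K$ is exactly $\mathcal{L}$. Since $B_0$ and $\tilde K'$ both fix every $L_i$ setwise, each $K$-orbit is contained in some $L_i$; conversely, given $zx,z'x'\in L_i$ we have $z,z'\in L'_\ell$ with $i\in J_\ell$, so some $\gamma\in K'$ carries $z$ to $z'$, whence $\tilde\gamma(zx)\in T_{z'}\cap L_i$, and an element of $B_0$ supported on the bush $T_{z'}$ moves it to $z'x'$. Thus each $L_i$ lies in a single $K$-orbit, and the two inclusions yield orbit partition $=\{L_1,\dots,L_k\}$. The main obstacle is precisely the lift: the naive lift of $K'$ acting trivially on the last coordinate need not respect $\mathcal{L}$, and the fix --- normalising the fibre partitions of all bushes in a common $\mathcal{L}'$-class to one model via the $\phi_z$'s --- is exactly where the strengthened conclusion of Theorem \ref{teoremone} (equality of the individual block sizes $\lambda_{i,s}$, not merely of the sums appearing in the lumpability condition) is indispensable.
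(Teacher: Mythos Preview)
Your proof is correct and follows essentially the same strategy as the paper: induction on $n$, passage to the quotient lumping $\mathcal{L}'$ via Lemmas \ref{lemmaL}--\ref{lemmalump}, and construction of $K$ from a lift of the inductively obtained group together with local permutations on the bush pieces $T_z\cap L_i$, all resting on the block-size constancy supplied by Theorem \ref{teoremone}. Your version is slightly more polished than the paper's in that your normalisations $\phi_z$ make the lift $\gamma\mapsto\tilde\gamma$ an honest homomorphism (yielding the semidirect product description $K=B_0\rtimes\tilde K'$), whereas the paper chooses the lifts $\overline{h}$ individually and simply works with the group they generate together with cyclic permutations on each $T_i^j$ in place of your full $Sym(T_z\cap L_i)$; but these are cosmetic differences and the underlying argument is the same.
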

\begin{proof} The proof works by induction on the depth of the tree.

For $n=1$, each part $L_i$ can be represented as $L_i=\{x_{i_1},
\ldots, x_{i_{j_i}}\}\subseteq X$. Then it is enough to take
$K=<\sigma>\leq Sym(q)$, where $\sigma$ is the permutation of
$Sym(q)$ whose cyclic decomposition is given by the product of
cycles $\prod_{i=1}^k(x_{i_1} \cdots x_{i_{j_i}})$.

Let $\mathcal{L}$ be a lumping for the Insect Markov chain on
$X^n$. It follows from Theorem \ref{teoremone} that, if $T_j=
\{z_jx : x\in\{0,1,\ldots,q-1\}\}$ decomposes as
$T_j=T^{i_1}_j\sqcup \ldots \sqcup T^{i_{m_i}}_j$, then for any
other index $s$ such that $T_s^{i_t}\neq \emptyset$ (for
$t=1,\ldots, m_i$), it must be $T_s= T^{i_1}_s\sqcup \ldots \sqcup
T^{i_{m_i}}_s$. Moreover, there exists a bijection between
$T_j^{i_t}$ and $T_s^{i_t}$, for every $t=1,\ldots, m_i$. By
deleting the last letter from vertices in $\mathcal{L}$ we get,
from Lemma \ref{lemmalump}, a lumping partition $\mathcal{L}'$ for
the Insect Markov chain on $X^{n-1}$. By induction, there exists a
subgroup $H \leq Aut(T_{q,n-1})$ whose orbits on $X^{n-1}$ provide
the partition $\mathcal{L}'$.

The subgroup $K$ satisfying the claim is obtained by extending in
a suitable way (compatible with the decomposition in any $T_j$)
the action of elements in $H$ to $Aut(T_{q,n})$, and by adding
elements acting nontrivially only on selected subsets of type
$T_j^{i_t}$. Observe that any $h\in H$ induces a permutation of
$X^{n-1}$. We can associate with $h$ an element $\overline{h}\in
Aut(T_{q,n})$ such that
$$
\overline{h}(z_ix)=\left\{
  \begin{array}{ll}
    z_ix, & \hbox{if $h(z_i)=z_i$} \\
    z_j\sigma_{h,i}(x) & \hbox{if $h(z_i)=z_j$}
  \end{array}
\right.,
$$
 where $\sigma_{h,i}\in Sym(q)$ is a permutation (obtained by product of cycles) such that
$$
\overline{h}(T_i^{i_t})=T_j^{i_t},
$$
for every $t=1,\ldots, m_i$. Moreover, for any non empty set
$T_i^j$ we define the element $g_{i,j}\in Aut(T_{q,n})$ such that
$$
g_{i,j}(z_sx)=\left\{
  \begin{array}{ll}
    z_sx & \hbox{if $s\neq i$} \\
  z_ix & \hbox{if $s=i$ and $x\not\in T_{i}^j$} \\
    z_i \tau_{i,j}(x) & \hbox{if $s=i$ and $x\in T_{i}^j$}
  \end{array}
\right.,
$$
where $\tau_{i,j}\in Sym(q)$ is a cyclic permutation of the
elements in $T_i^j$. Define
$$
K:=<\overline{h}, g_{i,j}\ : \ h\in H, i\in\{1,\ldots, q^{n-1}\},
\ j\in\{1,\ldots, k\}, T^j_i\neq \emptyset>.
$$
By construction, every part of $\mathcal{L}$ is closed under the
action of $K$, since it is closed under the elements
$\overline{h}$ and $g_{i,j}$. The orbit decomposition of $X^n$
under $K$ is given by $L_1\sqcup \ldots \sqcup L_k$, i.e., the
action of $K$ is transitive on each part of $\mathcal{L}$. In
fact, suppose that $z_sx$ and $z_py$ are elements in the same part
$L_r$. Then there exists $h\in H$ such that $h(z_s)=z_p$. By
definition, the automorphism $g=g_{p,r}^c\overline{h}\in K$, where
$c$ satisfies $g_{p,r}^c(\sigma_{h,s}(x))=y$, is such that
$g(z_sx)=z_py$.
\end{proof}

\begin{remark}\rm
The group $K$ is not uniquely determined, i.e., there exist
different subgroups of $Aut(T_{q,n})$ providing to the same
lumping partition.
\end{remark}

In contrast with Theorem \ref{teoinsetto}, for the generalized
Insect Markov chain associated with any poset $(I,\preceq)$ that
is not a chain or, equivalently, that contains two elements which
are not comparable, it is easy to find examples of lumping
partitions that are not induced by any subgroup of the
corresponding generalized wreath product of permutation groups.
\begin{example}\label{nonviene}
Consider the poset $(I,\preceq)$ in Fig. \ref{figure13}, for
$n=2$, and suppose that $X:=X_1=X_2=\{0,1,\ldots, q-1\}$. Let
$\mathcal{P}_I = \frac{1}{2}(U\otimes I+I\otimes U)$ be the Insect
Markov chain associated with $(I,\preceq)$. Take the lumping
$\widehat{\mathcal{L}}$ given by $L_1=\{00, (q-1)(q-1)\}$,
$L_2=\{0(q-1)\}$, $L_3=\{(q-1)0\}$, $L_4=\{0x, (q-1)y : x, y\neq
0, q-1\}$, $L_5=X^2\setminus (\sqcup_{i=1}^4 L_i)$. One can check
that such a partition cannot be induced by any subgroup of
$Sym(q)\times Sym(q)$.
\end{example}
Actually, the following more general result holds.
\begin{prop}\label{prop14}
Let $(I, \preceq)$ be a finite poset, with $|I|=n$, and suppose
that there exist $i,j\in I$ such that $i\nprec j$ and $j \nprec
i$. Let $\mathcal{P}_I$ be the Insect Markov chain on $X^n$
associated with $(I,\preceq)$ and let $F_I$ be the corresponding
automorphism group. Then there exists a lumping of $\mathcal{P}_I$
which is not induced by any subgroup of $F_I$.
\end{prop}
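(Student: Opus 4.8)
The plan is to generalise the counterexample of Example~\ref{nonviene} from the two-element antichain to an arbitrary non-chain poset, by \emph{localising} the obstruction on a pair of incomparable coordinates. Fix $i\neq j$ in $I$ with $i\nprec j$ and $j\nprec i$, set $R=I\setminus\{i,j\}$, and delete the coordinates in $R$. Arguing as in the proof of Theorem~\ref{forgetisgood} (the computation is the same, the principal ideals being replaced by ancestral sets), this is a lumping of $\mathcal P_I$, and the lumped chain on $X_i\times X_j$ is a two-coordinate chain
$$
\mathcal Q=P_\emptyset\,U_i\!\otimes\! U_j+P_i\,I_i\!\otimes\! U_j+P_j\,U_i\!\otimes\! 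I_j+P_{ij}\,I_i\!\otimes\! I_j,
\qquad P_B=\!\!\!\sum_{\substack{A\in\mathcal A,\ A\neq I\\ A\cap\{i,j\}=B}}\!\!\!p_A .
$$
Summing the formula for $p(x,y)$ over a cylinder block one checks that $p_{\mathcal P_I}(x,\widehat L)$ depends on $x$ only through its $\{i,j\}$-coordinates and equals $p_{\mathcal Q}$ evaluated on the corresponding block; hence (this is also the observation at the end of Section~\ref{sectionpreliminaries} on lumping a lumped chain) for \emph{any} lumping $\widehat{\mathcal L}_0$ of $\mathcal Q$ the pullback $\widehat{\mathcal L}$ to $X^n$ --- two words equivalent iff their $\{i,j\}$-coordinates are $\widehat{\mathcal L}_0$-equivalent --- is a lumping of $\mathcal P_I$. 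It therefore suffices to produce a lumping $\widehat{\mathcal L}_0$ of $\mathcal Q$ whose pullback is not an orbit partition of any subgroup of $F_I$.

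For $\widehat{\mathcal L}_0$ I would mimic the five-block partition of Example~\ref{nonviene} on the grid $X_i\times X_j$: a block $L_1$ containing a diagonal pair $\{(0,0),(q-1,q-1)\}$, anti-diagonal blocks $L_2=\{(0,q-1)\}$ and $L_3=\{(q-1,0)\}$, and large blocks absorbing the remaining cells. Since the transition probability of $\mathcal Q$ depends only on which of the two coordinates agree, checking that $\widehat{\mathcal L}_0$ is a lumping of $\mathcal Q$ reduces to the same finite verification as in Example~\ref{nonviene}, and goes through verbatim whenever $P_i=P_j$. The genuine difficulty is the asymmetric case $P_i\neq P_j$: there $(0,0)$ and $(q-1,q-1)$ are at different ``distances'' from the anti-diagonal cells, so $L_1$ must be enlarged so that the weighted counts $P_i\cdot\#\{y\in\ell:y_i=x_i\}+P_j\cdot\#\{y\in\ell:y_j=x_j\}$ become constant on blocks --- using that $P_i/P_j\in\mathbb Q$ one can pair up cells to achieve this, at the price of requiring $q$ large enough (the case $q=2$ needs a separate multi-coordinate gadget, as already in the two-element antichain). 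Arranging this rebalancing while keeping the blocks $L_2,L_3$ (or their analogues) small enough to retain the obstruction is the main technical point.

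Finally, suppose $K\le F_I$ had $\widehat{\mathcal L}$ as its orbit partition. The pulled-back blocks $\widehat L_2=\{x:x_i=0,\ x_j=q-1\}$ and $\widehat L_3=\{x:x_i=q-1,\ x_j=0\}$ are then $K$-invariant. Because $i\nprec j$ and $j\nprec i$ we have $A(i),A(j)\subseteq R$, so the $A(i)$- and $A(j)$-coordinates of a word are unconstrained inside $\widehat L_2$ and $\widehat L_3$; unwinding Definition~\ref{labelbayley} ($(kx)_i=(k_i x_{A(i)})x_i$) this forces, for every $k=(k_\ell)_{\ell\in I}\in K$ and every value $v$ of the relevant ancestral coordinates, the permutation $k_i(v)\in Sym(X_i)$ to fix both $0$ and $q-1$, and likewise $k_j(v)$ to fix both $0$ and $q-1$. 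Hence for the word $\mathbf 0$ we get $(k\mathbf 0)_i=(k\mathbf 0)_j=0$ for all $k\in K$, so the $K$-orbit of $\mathbf 0$ stays inside $\{y:y_i=y_j=0\}$ and never reaches the word equal to $q-1$ in coordinates $i,j$ and to $0$ elsewhere --- although both lie in the block $\widehat L_1$. This contradicts $\widehat L_1$ being a single $K$-orbit, so $\widehat{\mathcal L}$ is a lumping of $\mathcal P_I$ not induced by any subgroup of $F_I$. For $n=2$ this is exactly the argument of Example~\ref{nonviene}.
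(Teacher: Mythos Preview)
Your strategy is the paper's: pull back the five-block partition $\widehat{\mathcal L}$ of Example~\ref{nonviene} along the projection $X^n\to X_i\times X_j$, then show that no subgroup of $F_I$ can have this pullback as its orbit partition. Your endgame is in fact cleaner than the paper's --- using invariance of the singleton blocks $\widehat L_2,\widehat L_3$ to force every label permutation $k_i(v),k_j(v)$ to fix both $0$ and $q-1$ works uniformly in $i,j$, whereas the paper first replaces $i,j$ by an incomparable pair with $A(i)=A(j)$ and then treats the cases $A(i)=A(j)=\emptyset$ and $A(i)=A(j)\neq\emptyset$ separately.

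The gap you flag in the asymmetric case $P_i\neq P_j$ is real, but your proposed fix cannot work. With $L_2=\{(0,q-1)\}$ a singleton and $(0,0),(q-1,q-1)\in L_1$ one computes directly
\[
p_{\mathcal Q}\bigl((0,0),L_2\bigr)=\frac{P_\emptyset}{q^2}+\frac{P_i}{q},
\qquad
p_{\mathcal Q}\bigl((q-1,q-1),L_2\bigr)=\frac{P_\emptyset}{q^2}+\frac{P_j}{q},
\]
and these differ whenever $P_i\neq P_j$, \emph{regardless of how the remaining blocks are arranged or how $L_1$ is enlarged}. So no ``rebalancing'' of the other cells can make $\widehat{\mathcal L}_0$ a lumping of $\mathcal Q$ while retaining the three features your obstruction needs (both diagonal points in a single block, each anti-diagonal point a singleton block). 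If you enlarge $L_2,L_3$ instead, the group argument you wrote collapses; and ``$q$ large enough'' is not available, since $q=|X|$ is fixed by the data. The paper's proof does not confront this difficulty either --- its reduction to $A(i)=A(j)$ still permits $P_i\neq P_j$, as the poset of Figure~\ref{figureNN} with $(i,j)=(1,2)$ shows --- so a different construction of $\widehat{\mathcal L}_0$, or an argument that does not factor through the two-coordinate chain $\mathcal Q$, is genuinely needed here.
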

\begin{proof} Let $i$ and $j$ be two elements of $I$ such
that $i\nprec j$ and $j\nprec i$. As in Example \ref{nonviene}, we
can define on the cartesian product $X_i\times X_j\cong X\times X$
a lumping $\widehat{\mathcal{L}}$ which is not induced by any
subgroup of $Sym(X_i)\times Sym(X_j)\simeq Sym(q)\times Sym(q)$.
We can extend such a lumping to a lumping $\mathcal{L}$ of $X^n$
by saying that $x=(x_1, \ldots, x_n)$ and $y=(y_1, \ldots, y_n)$
belong to the same part of $\mathcal{L}$ if and only if
$(x_i,x_j)$ and $(y_i,y_j)$ belong to
the same part of $\widehat{\mathcal{L}}$. We can distinguish two cases.\\
\indent If $A(i)=A(j)=\emptyset$, then the restriction of the
action of $F_I$ on the factors $X_i$ and $X_j$ coincides with the
action of the group $Sym(q)\times Sym(q)$, what implies that we
cannot find any subgroup $H\leq F_I$ whose
orbits coincide with the parts of $\mathcal{L}$.\\
\indent On the other hand, in the case $A(i),A(j)\neq \emptyset$,
we can assume $A:=A(i)=A(j)$. Any element $x\in X^n$ can be
represented as $x=(x_A,x_i,x_j, x_C)$ where $x_A=(x_{l_1},\ldots,
x_{l_t})$, with $l_m\in A$ and $x_C=(x_{c_1},\ldots, x_{c_s})$,
with $\{c_1,\ldots, c_s\}=I\setminus (A\sqcup \{i,j\})$. If we
admit that the lumping partition $\mathcal{L}$ is induced by a
group $K$, then there must exist an $|A|$-tuple $z_A$ and an
automorphism $f=(f_i)_{i\in I}\in K$ such that $f_h(z_A)=\sigma_h
\in Sym(X_h)$, $h=i,j$, with $\sigma_h(0)=q-1$. On the other hand,
the action of $\sigma_h$ on the part of $\mathcal{L}$ induced by
the part $L_2\in \widehat{\mathcal{L}}$, for instance, produces an
element which does not belong to the same part. This is a
contradiction.
\end{proof}

\begin{example}[{\bf Spherical lumping on a poset block structure via Gelfand pairs}]
Take the Insect Markov chain $\mathcal{P}$ associated with a poset
$(I,\preceq)$. Assume that $X:=X_1=\cdots = X_n$, with $|X|=q$. We
know that the state space $X^n$ of $\mathcal{P}$ is the boundary
of the poset block structure associated with $(I,\preceq)$, whose
automorphism group is the generalized wreath product $F_I$ of the
symmetric groups $Sym(X_i)$.
Moreover, $\mathcal{P}$ is invariant with respect to such a group.\\
\indent If we fix the element $x_0 = 0^n$ and consider the
subgroup $H = Stab_{F_I}(x_0)$, then $X^n$ can be regarded as the
homogeneous space $X^n \cong F_I/H$. In \cite{orthogonal} the
authors showed that $(F_I,H)$ is a Gelfand pair, so the
decomposition of the space $L(X^n)$ into irreducible submodules
under the action of $F_I$ is multiplicity-free: $L(X^n) =
\bigoplus_{S\subseteq I \ \text{antichain}}W_S$, with
\begin{eqnarray*}
W_S=\left(\bigotimes_{i\in A(S)}L(X_i)\right)\otimes
\left(\bigotimes_{i\in
S}L(X_i)_1\right)\otimes\left(\bigotimes_{i\not\in
A[S]}L(X_i)_0\right),
\end{eqnarray*}
where the subspaces $L(X_i)_0$ and $L(X_i)_1$ are defined as in
Example \ref{exampleAlfredo}. Observe that the irreducible
submodules are indexed by the antichains of $(I,\preceq)$. On the
other hand, the orbits in $X^n$ under the action of $H$ are also
indexed by the antichains \cite{orthogonal}:
$X^n=\coprod_{S\subseteq I \ \text{antichain}}O_S$, with
$$
O_S= \left(\prod_{i\in H(S)}X_i \right)\times
\left(\prod_{i\not\in H[S]}X_i^0 \right)\times \left(\prod_{i\in
S}X_i^1 \right),
$$
where $X_i^0 = \{0\}$ and $X_i^1= X_i\setminus \{0\}$. Since the
cardinality of the state space of the lumped Markov chain
$\mathcal{P}_H$ is equal to the number of orbits, it follows from
Proposition \ref{propspectralanalysis} that each of the
eigenvalues listed in \cite{orthogonal} for $\mathcal{P}$ is an
eigenvalue of $\mathcal{P}_H$ with multiplicity $1$. The
corresponding eigenvectors can be easily deduced by the analysis
performed in \cite{orthogonal}.
\end{example}


\end{document}